\documentclass[a4paper,10pt]{amsart}

\usepackage[english]{babel}
\usepackage{dsfont} 
\usepackage{graphicx}
\usepackage{color}
\usepackage{enumerate}
\usepackage{array,multirow}

\allowdisplaybreaks

\usepackage{amssymb} 
\usepackage{amsthm} 
\usepackage{dsfont}

\newtheorem{thm}{Theorem}

\newtheorem{lem}[thm]{Lemma}

\newtheorem{prop}[thm]{Proposition}

\theoremstyle{definition}

\newtheorem{exm}[thm]{Example}
\newtheorem{rem}[thm]{Remark}


\newcommand{\dT}{\mathds{T}}
\newcommand{\dN}{\mathds{N}}
\newcommand{\dZ}{\mathds{Z}}

\newcommand{\dR}{\mathds{R}}
\newcommand{\dC}{\mathds{C}}
\newcommand{\ii}{\mathrm{i}}


\newcommand{\cI}{\mathcal{I}\,}
\newcommand{\cJ}{\mathcal{J}\,}

\newcommand{\her}{\mathrm{her}\,}

\newcommand{\ov}{\overline}




\newcommand{\cF}{\mathcal{F}}

\newcommand{\cZ}{\mathcal{Z}}


\newcommand{\sN}{\mathsf{N}}


\newcommand{\fg}{\mathfrak{g}}

\bibliographystyle{elsarticle-num}

\setcounter{tocdepth}{3}

\author{Claus Scheiderer}
\address{University of Konstanz,  Fachbereich Mathematik und Statistik, D-78457
Konstanz, Germany}
\email{claus.scheiderer@uni-konstanz.de}

\author{Konrad Schm\"udgen}
\address{University of Leipzig, Mathematical Institute, Augustusplatz 10/11, D-04109 Leipzig, Germany}
\email{\tt schmuedgen@math.uni-leipzig.de}

\date{}
\begin{document}

\begin{title}{Moment Property and Positivity for some Algebras of Fractions}
\end{title}
\date{\today}
\dedicatory{Dedicated to the memory of Alexander Prestel (17.1.1941--4.3.2024) }

\begin{abstract}
T. M. Bisgaard \cite{bisgaard} proved that the  $*$-algebra
$\dC[z,\ov{z},z^{-1},\ov{z}^{-1}]$\, has the moment property, that is,
each positive linear functional on this $*$-algebra is a moment
functional. We generalize this result to polynomials in $d$ variables
$z_1,\dots,z_d.$ We prove that there exist $3d-2$ linear polynomials
as denominators such that the corresponding $*$-algebra has the
moment property, while  for $3$ linear polynomials in case $d=2$
the moment property always fails. Further, it is shown that for
the real algebras $\dR[x,y,\frac1{x^2+y^2}]$ (the hermitean
part of $\dC[z,\ov{z},z^{-1},\ov{z}^{-1}]$) and
$\dR[x,y,\frac{x^2}{x^2+y^2},\frac{xy}{x^2+y^2}]$, all positive
semidefinite elements are sums of squares.  These results are used
to prove that for the semigroup $*$-algebras of  $\dZ^2$,
$\dN_0\times \dZ$ and ${\sN}_+:=\{(k,n)\in \dZ^2:k+n\geq 0\}$, all
positive semidefinite elements are sums of hermitean squares.
\end{abstract}
\maketitle

\textbf{AMS  Subject  Classification (2020)}. 12 D 15, 44 A 60 (Primary),\\ 14 P 10 (Secondary).

\textbf{Key  words:}  complex moment problem, fibre theorem, sums of squares, $*$-semigroup

\section{Introduction}

The starting point for this paper is the following remarkable theorem
of  T. M. Bisgaard \cite{bisgaard}: Each positive semidefinite sequence
on the $*$-semigroup $\dZ^2$, with involution $(m,n)^*=(n,m)$ for
$n,m\in \dZ$, is  a moment sequence.
It is well known that the map $(m,n)\mapsto z^m\ov{z}^n$\, gives a $*$-isomorphism
from the semigroup $*$-algebra $\dC[\dZ^2]$ to the $*$-algebra\,
$\dC[z,\ov{z},z^{-1},\ov{z}^{-1}]$\, of  complex Laurent polynomials in $z$
and $\ov{z}$, with involution $z_j^*:=\ov{z_j}$. Using this $*$-isomorphism,
Bisgaard's theorem can be reformulated by saying  that each positive linear functional $L$ on the $*$-algebra $\dC[z,\ov{z},z^{-1},\ov{z}^{-1}]$\, is a moment functional. That is, if $L(f^*f)\geq 0$ for all $f\in \dC[z,\ov{z},z^{-1},\ov{z}^{-1}]$,
there exists  a Radon measure $\mu$ on $\dC\backslash \{0\}$  such that
\begin{align*}
L(p)=\int_{\dC\backslash \{0\}} p(z,\ov{z})\, d\mu(z)\quad \text{\textrm{for}} ~~~p\in\dC[z,\ov{z},z^{-1},\ov{z}^{-1}]\,.
\end{align*}
In this paper we are looking for generalizations of this important result  to higher dimensions.

Throughout we let $d\in \dN$ and  consider  the complex unital
$*$-algebra\, $$\dC_d[z,\ov{z}]:=\dC[z_1,\dots,z_d,\ov{z_1},\dots, \ov{z_d}]$$
of  complex  polynomials in $z_1,\dots,z_d$ and $\ov{z_1},\dots,\ov{z_d}$.
The involution $p\to p^*$ of $\dC_d[z,\ov{z}]$ is uniquely determined by
the requirement $z_j^*=\ov{z_j}$ for $j=1,\dots,d$.

Suppose that $\cF:=\{f_1,\dots,f_m\}$ is a finite subset of
$\dC_d[z,\ov{z}]$. To avoid trivial cases we assume throughout that none
of the polynomials $f_j$ is constant. Let
\begin{align}\label{defA}
A:=\dC\big[z_1,\dots,z_d,\ov{z_1},\dots,\ov{z_d},f_1^{-1},\dots,f_m^{-1},(\,\ov{f_1}\, )^{-1},\dots,(\, \ov{f_m}\,)^{-1}\big].
\end{align}
 Clearly,  $A$ is the algebra generated by $\dC_d[z,\ov{z}]$ and $f^{-1}$, $(\, \ov{f}\, )^{-1}$ where $f:=f_1f_2\cdots f_m$.
 Then $A$ is a complex unital $*$-algebra which contains $\dC_d[z,\ov{z}]$ as a $*$-subalgebra. Observe that Bisgaard's theorem corresponds to the case  $d=1, m=1, \cF=\{z\}$.

The first problem we study is when the $*$-algebra $A$ has the moment property:\smallskip

{\it 1.) When is each positive linear functional on $A$  a moment functional?}\smallskip

\noindent
For this we use the fibre theorem which was proved in \cite{sch03} in the polynomial case and in \cite{sch16} for arbitrary finitely generated real algebras. Our method of proof is essentially based on real algebra, while the problems and the  results   are formulated for commutative complex $*$-algebras.

Our main result concerning the first problem says that there are $m:=3d-2$\linebreak
linear polynomials $f_j$  in $z_1,\dots,z_d$ such that $A$ satisfies the
moment property (Theorem \ref{mainthmp}). If $d=2$ we show, for arbitrary
linear polynomials $f_1,f_2,f_3$, that the corresponding $*$-algebra does
not have the moment property (Theorem \ref{mpfails}).

\smallskip
Now let $A$ be an arbitrary finitely generated  commutative unital
complex $*$-algebra.  Its hermitean part $B:=A_\her$ is a finitely
generated commutative real algebra. We denote by $\Sigma B^2$  the set of all sums of squares in $B$ and by $\Sigma_h A^2$ the set of all sums of hermitean squares $a^*a$ in $A$. Further, $B_+$ and $A_+$ are the sets of elements of $B$ and $A$, respectively, that are nonnegative on all characters. Then $A$ has the moment
property if and only if $B$ does, or equivalently, $B_+$ is the
{\it closure} of $\Sigma B^2$ in the finest locally convex topology (see \cite[Proposition 13.9(ii)]{sch17}). In particular, if $\Sigma B^2=B_+$,  then $A$ and $B$ have the moment property. The converse is not true in general and the requirement $\Sigma B^2=B_+$ is stronger than the moment property. A second problem is the following:\smallskip

{\it  2.) If $A$ has the moment property,  when does it follow
that $\Sigma B^2=B_+$ (or equivalently, $\Sigma_h A^2=A_+$)?}\smallskip

For the real algebras
\begin{align}\label{realfraction}
 \dR\Big[x,y,\frac1{x^2+y^2}\Big]~~~ {\rm and}~~~\dR\big[x,y,\frac{x^2}{x^2+y^2},\frac{xy}{x^2+y^2}\Big]
\end{align} we prove that all positive semidefinite elements are sums of squares (Theorems \ref{bisg} and \ref{bisgbdd}). As shown in \cite{sw}, they same is true for the real
algebra $\dR[x,y,z]/(1-x^2-y^2)$ (Theorem \ref{cylthm}).
It is known for each of the three $*$-semigroups
\begin{align}\label{defS}
S=\dZ^2, ~ \dN_0\times \dZ, ~ {\sN}_+:=\{(k,n)\in \dZ^2:k+n\geq 0\}
\end{align}
that
  all  positive semidefinite sequences are moment sequences, or equivalently,
that
  the semigroup $*$-algebra $\dC[S]$  has the moment property. Using
  Theorems  \ref{bisg}--\ref{bisgbdd} we prove the stronger result that
  $\Sigma_h\dC[S]^2 = \dC[S]_+$ (Theorem \ref{semigroup}).

This paper is organized as follows.
In Section \ref{prel} we collect basic definitions and preliminary results. In particular, we recall the special case of the fibre theorem that we shall use. We discuss in great detail the interplay between properties of a commutative complex $*$-algebra $A$ and its hermitean part $A_\her$.

In Section \ref{algrational} we study the algebra $A$ defined by (\ref{defA}), and derive a convenient parametrization of the non-empty fibre sets. This will be the technical tool for applying the fibre theorem.

In  Section \ref{genbisgaard} we investigate the first problem
in the case when $f_1,\dots,f_m$ are linear polynomials in
$z_1,\dots,z_d$.  The main result of this section  can be considered
as a generalization of Bisgaard's theorem to higher dimensions.
Also, we  derive a result where  the moment property fails.

Sections \ref{psdsos} and \ref{semigroupappl} deal with the second
problem. In Section \ref{psdsos} we show that positive semidefinite
elements are sums of squares for the real algebras (\ref{realfraction}), while in Section \ref{semigroupappl}  it is shown
 that equality $\Sigma_h\dC[S]^2=\dC[S]_+$ holds for the three $*$-semigroups $S$ in (\ref{defS}).

\section{Preliminaries}\label{prel}

Suppose $B$ is a \emph{finitely generated real unital algebra}.

We denote by $\Sigma B^2$ the {\it sos cone} of $B$, that is,
$\Sigma B^2$ is the set of all finite sums of squares $\sum_ib_i^2$
with $b_i\in B$. A character of $B$ is an algebra homomorphism  $\chi:B\to \dR$ satisfying $\chi(1)=1$.  The set of all characters of $B$ is denoted by $\widehat{B}$. Further, define
\begin{align*}
B_+=\{ b\in B: \chi(b)\geq 0 ~~ \textrm{for all}~~\chi\in \hat{B}\, \}.
\end{align*}
The elements of $B_+$ are called {\it positive semidefinite},  or briefly {\it psd}. Obviously, $\Sigma B^2\subseteq B_+$.

Let us fix a set $\{b_1,\dots,b_d\}$  of generators of the algebra $B$.
Then  there exists a unique surjective  unital algebra homomorphism
$\pi: \dR[x_1,\dots,x_d]\to B$ such that $\pi(x_j)=b_j$ ($j=1,\dots,d$).
The kernel $J$  of $\pi$ is an ideal of $ \dR[x_1,\dots,x_d]$ and $B$ is
isomorphic to the quotient algebra\,  $\dR[x_1,\dots,x_d]/J $.
Each character $\chi$ of $B$ is  uniquely determined by  the point
$x_\chi:=(\chi(b_1),\dots,\chi(b_d))$ of $\dR^d$.  We identify
$\chi$ with $x_\chi$ and write $f(x_\chi):=\chi(f)$ for $f\in B$.
Under this identification,\, $\widehat{B}$ becomes the real algebraic set
$\widehat{B}=\cZ(J):=\{x\in \dR^d:p(x)=0, ~p\in J\}.$
Since $\cZ(\cJ)$ is closed in $\dR^d$, $\widehat{B}$ is a locally compact Hausdorff space in the induced topology of $\dR^d$ and elements of $B$ can be considered as continuous functions on $\widehat{B}.$

 A linear functional $L:B\to \dR$  is called a {\it moment functional} if there exists a (positive) Radon measure $\mu$ on the locally compact space $\widehat{B}$ such that for all $f\in B$,
 \begin{align}\label{lrepmu}
 L(f)=\int_{\widehat{B}} f(x)\, d\mu(x).
 \end{align}
 If $L$ is a moment functional on $B$, then obviously $L(f^2)=\int_{\widehat{B}} f^2\, d\mu\geq 0$ for all $f\in B$, so $L$ is non-negative on $\Sigma B^2$.  The converse implication does not hold in general, that is, being nonnegative on $\Sigma B^2$  does not imply that $L$ is a moment functional.  It holds for $B=\dR[x]$ by Hamburger's solution of the moment problem, but not for $B=\dR[x_1,\dots,x_d]$ if $d\geq 2$. Finding  algebras for which the converse  is true is an important and difficult task of the moment problem theory.

 We shall say that $B$ has the {\it moment property} if each linear functional $L:B\to \dR$ such that $L(f^2)\geq 0$ for all $f\in B$ is a moment functional.

Next we turn to the fibre theorem.
Suppose
 $(h_1,\dots,h_n)$\, is an $n$-tuple of  elements  $h_j\in B$ which are
 {\it bounded}  on the character set $\widehat{B}$. For $\lambda=(\lambda_1,\dots,\lambda_n)\in \dR^n $, let $\cI(\lambda)$ denote the ideal of $B$ generated by the elements $h_1-\lambda_1,\dots,h_n-\lambda_n$. The ideal $\cI(\lambda) $ is called the {\it fibre ideal} of $\lambda$. Its zero set is the {\it fibre set}
\begin{align*}
\cZ(\lambda)=\{ x\in \widehat{B}: h_1(x)=\lambda_1,\dots,h_n(x)=\lambda_n \}.
\end{align*}
It is possible that the fibre set $\cZ(\lambda)$ is empty. Let $\widehat{\cI}(\lambda)$ be the ideal of all  $a\in B$ which vanish on the set $\cZ(\lambda)$. This means that $\widehat{\cI}(\lambda)$ is the real radical of $\cI(\lambda)$.

In this paper we will use the following special case of the fibre theorem \cite{sch03}, \cite{sch16}. It is the equivalence $(i)\leftrightarrow (iii)'$ of  \cite[Theorem 13.10]{sch17}  in the special case when the  preordering  of $B$ is the set of all sums of squares.
\begin{prop}\label{fibreth1}
Under the preceding assumptions  the following are equivalent:
\begin{itemize}
\item[\em (i)]  $B$ has the moment  property.
\item[\em (ii)]    For each $\lambda\in \dR^n$ such that the fibre set $\cZ(\lambda)$ is not empty, the quotient algebra $B /\widehat{\cI}(\lambda)$ has the moment property.
\end{itemize}
\end{prop}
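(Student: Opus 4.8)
The statement is precisely the special case $T=\Sigma B^2$ of the fibre theorem of \cite{sch03}, \cite{sch16}, and the plan is to prove the two implications separately: $(i)\Rightarrow(ii)$ is elementary, while $(ii)\Rightarrow(i)$ is the substantial direction.

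For $(i)\Rightarrow(ii)$ I would pull back representing measures. Fix $\lambda\in\dR^n$ with $\cZ(\lambda)\neq\emptyset$, write $\bar B:=B/\widehat{\cI}(\lambda)$ and let $q\colon B\to\bar B$ be the quotient map. First identify $\widehat{\bar B}$ with $\cZ(\lambda)$: a character of $\bar B$ is a character $\chi$ of $B$ vanishing on $\widehat{\cI}(\lambda)$, and since $\widehat{\cI}(\lambda)$ is the real radical of $\cI(\lambda)$ its zero set is $\cZ(\cI(\lambda))=\cZ(\lambda)$, so by the real Nullstellensatz $\widehat{\bar B}=\cZ(\lambda)$, a closed subset of $\widehat B$ with the subspace topology. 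Now let $\bar L\colon\bar B\to\dR$ be positive and put $L:=\bar L\circ q$; then $L(b^2)=\bar L(q(b)^2)\ge 0$, so by $(i)$ there is a Radon measure $\mu$ on $\widehat B$ with $L(b)=\int_{\widehat B}b\,d\mu$ for all $b\in B$. Since $h_j-\lambda_j\in\widehat{\cI}(\lambda)$ we get $\int_{\widehat B}(h_j-\lambda_j)^2\,d\mu=L((h_j-\lambda_j)^2)=0$, hence $\mu$ is supported on $\{h_j=\lambda_j\}$ for every $j$, and therefore on $\cZ(\lambda)$. Regarding $\mu$ as a Radon measure on $\widehat{\bar B}=\cZ(\lambda)$ gives $\bar L(\bar a)=L(a)=\int_{\widehat{\bar B}}\bar a\,d\mu$ for all $a\in B$, so $\bar L$ is a moment functional and $(ii)$ holds.

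For $(ii)\Rightarrow(i)$ I would first replace the moment property by its closure reformulation recalled in the introduction: $B$ has the moment property if and only if $\overline{\Sigma B^2}=B_+$, the closure being taken in the finest locally convex topology. Since $B_+$ is always closed — it is an intersection of the closed half-spaces $\{a:\chi(a)\ge 0\}$, $\chi\in\widehat B$ — and contains $\Sigma B^2$, it suffices to prove $B_+\subseteq\overline{\Sigma B^2}$. If this failed, Hahn--Banach separation (in the finest locally convex topology every linear functional is continuous, and a point outside a closed convex cone is strictly separated from it by such a functional, which one may normalize to be nonnegative on the cone) would produce $a\in B_+$ and a positive linear functional $L$ on $B$ with $L(a)<0$; one must contradict this by manufacturing a representing measure for $L$ from hypothesis $(ii)$. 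The crucial role of the boundedness of $h_1,\dots,h_n$ on $\widehat B$ is that $h=(h_1,\dots,h_n)\colon\widehat B\to\dR^n$ has relatively compact image $K:=\overline{h(\widehat B)}$, over which one disintegrates $L$: one obtains a base measure $\nu$ on $K$ together with, for $\nu$-almost every $\lambda$, a representing Radon measure $\mu_\lambda$ for the fibre functional that $L$ induces on $B/\widehat{\cI}(\lambda)$ — which exists exactly because that quotient has the moment property by $(ii)$ — and assembles $\mu:=\int_K\mu_\lambda\,d\nu(\lambda)$ into a Radon measure on $\widehat B$ representing $L$. Then $L(a)=\int_{\widehat B}a\,d\mu\ge 0$ since $a\in B_+$ is nonnegative on $\widehat B\supseteq\operatorname{supp}\mu$, the desired contradiction.

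I expect the main obstacle to be this disintegration step: arranging that the fibre measures $\mu_\lambda$ depend measurably on $\lambda$ and that they glue to a genuine Radon measure on $\widehat B$ reproducing $L$ on all of $B$. Finite generation of $B$ — so that $\widehat B$ is second countable — is what makes the measurable selection and the disintegration go through, and the reduction of the "base'' problem to a moment problem on the compact set $K$ is the other technical ingredient. Since this is precisely the content of the fibre theorem, in practice one simply invokes \cite{sch03}, \cite{sch16} (or \cite[Theorem 13.10]{sch17}) rather than redoing it; the present proposition is its specialization to the sums-of-squares preordering.
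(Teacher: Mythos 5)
Your proposal reaches the same endpoint as the paper, which simply cites the fibre theorem of \cite{sch03}, \cite{sch16} (in the form of \cite[Theorem~13.10]{sch17}, specialized to the preordering $\Sigma B^2$) without offering any proof at all. You correctly identify that the substantive direction (ii)$\Rightarrow$(i) \emph{is} the fibre theorem and defer to the same citation — your disintegration sketch is a plausible heuristic but is explicitly not carried out, so nothing hinges on it — while your self-contained argument for (i)$\Rightarrow$(ii) (pull back $\bar L$, represent $L=\bar L\circ q$ by a measure $\mu$, use $L((h_j-\lambda_j)^2)=0$ to force $\operatorname{supp}\mu\subseteq\cZ(\lambda)$, then restrict) is correct and is a worthwhile addition that the paper omits.
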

Let us retain the preceding setup and consider an important special case which will be needed later.
\begin{prop}\label{mplines}
Suppose that $p_1,\dots,p_k,q_1,\dots,q_k$ are  polynomials of $\dR[x_1,\dots,x_n]$ such  that $p_j\neq 0$ or $q_j\neq 0$ for each $j=1,\dots,k$.
Let
$$B:=\dR[x_1,\dots,x_n,(p_1^2+q_1^2)^{-1},\dots,(p_k^2+q_k^2)^{-1}]$$ If all non-empty fibre sets  of $B$ are subsets of straight lines on $\dR^n$, then $B$ has the moment property.
\end{prop}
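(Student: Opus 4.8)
The plan is to apply the fibre theorem (Proposition~\ref{fibreth1}) to $B$ with respect to the bounded tuple $(h_1,\dots,h_n)$ fixed in the preceding setup, and to show that every non-trivial fibre quotient of $B$ is a localized polynomial algebra in a \emph{single} variable, for which the moment property is (essentially) the solution of the strong Hamburger moment problem. So fix $\lambda\in\dR^n$ with $\cZ(\lambda)\neq\emptyset$; by Proposition~\ref{fibreth1} it suffices to prove that $C:=B/\widehat{\cI}(\lambda)$ has the moment property. Since $\widehat{\cI}(\lambda)$ is the vanishing ideal of $\cZ(\lambda)$, the character set of $C$ is $\widehat{C}=\cZ(\lambda)$, which by hypothesis lies on an affine line $\ell\subseteq\dR^n$. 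If $\cZ(\lambda)$ is finite, then $C$ is finite-dimensional, hence a finite product of copies of $\dR$, and every positive functional on it is trivially a moment functional (a finite sum of point evaluations); so we may assume $\cZ(\lambda)$ is infinite, hence Zariski-dense in $\ell$.

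Parametrizing $\ell$ as $t\mapsto a+tv$ with $a,v\in\dR^n$, $v\neq 0$, restriction of rational functions to $\ell$ gives a homomorphism $B\to\dR(t)$ sending $x_i\mapsto a_i+tv_i$ and $(p_j^2+q_j^2)^{-1}\mapsto\bigl(P_j(t)^2+Q_j(t)^2\bigr)^{-1}$, where $P_j(t):=p_j(a+tv)$ and $Q_j(t):=q_j(a+tv)$ lie in $\dR[t]$; here $P_j^2+Q_j^2\not\equiv 0$, because $p_j^2+q_j^2$ is a unit of $B$ and so does not vanish on $\widehat{B}\supseteq\cZ(\lambda)$. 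By Zariski-density of $\cZ(\lambda)$ in $\ell$, the kernel of $B\to\dR(t)$ is exactly $\widehat{\cI}(\lambda)$, so $C$ is isomorphic to the image, namely the one-variable localized polynomial algebra
\begin{align*}
D:=\dR\bigl[t,\,(P_1(t)^2+Q_1(t)^2)^{-1},\dots,(P_k(t)^2+Q_k(t)^2)^{-1}\bigr]\subseteq\dR(t).
\end{align*}
This $D$ is precisely an instance of the proposition being proved with $n=1$, where the hypothesis on fibres is vacuous since $\dR^1$ is itself a line; thus everything reduces to the assertion that such a one-variable algebra $D$ has the moment property, and this base case cannot be reduced further by the fibre theorem (e.g.\ $\dR[t,t^{-1}]$ has no nonconstant bounded elements).

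To treat $D$, write $h:=\prod_j(P_j^2+Q_j^2)\in\dR[t]$; being a sum of two squares it has all real zeros of even multiplicity, so $h=R(t)^2w(t)$ with $w$ positive definite, whence $D=\dR[t,h^{-1}]=\dR[t,R^{-1},w^{-1}]$ and $\widehat{D}=\dR\setminus\{c_1,\dots,c_m\}$ for the distinct real zeros $c_i$ of $R$. Given $L:D\to\dR$ with $L(f^2)\geq 0$ for all $f\in D$, its restriction to $\dR[t]$ is a positive functional, hence by Hamburger's theorem has a representing measure $\nu$ on $\dR$. One then shows $\nu$ can be chosen so that $\nu(\{c_i\})=0$ for all $i$ and $\int f\,d\nu=L(f)$ for every $f\in D$: since $(t-c_i)^{-1}\in D$, applying Hamburger's theorem to the positive functional $s\mapsto L\bigl(s(t-c_i)^{-2}\bigr)$ on $\dR[t]$ produces a measure $\rho_i$ with $(t-c_i)^2\rho_i$ and $\nu$ having equal moments, and $(t-c_i)^2\rho_i=\nu$ then yields both assertions; the denominators $w^{-N}$ (with $w$ a sum of two squares in $\dR[t]$) are handled in the same way. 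Restricting $\nu$ to $\widehat{D}$ then represents $L$ on all of $D$, so $D$ has the moment property.

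The genuinely non-formal ingredient is this one-variable case: passing from ``$\nu$ and $(t-c_i)^2\rho_i$ have equal moments'' to the equality of the two measures is immediate when the Hamburger moment problem for $L|_{\dR[t]}$ is determinate, but in the indeterminate case one must instead select, among all representing measures of $L|_{\dR[t]}$, one that is simultaneously compatible with the values of $L$ on the fractions $s(t-c_i)^{-2}$ and $sw^{-N}$. This is the technical heart of the strong Hamburger moment problem, which is classical --- for $h=t$ it is the strong Hamburger problem in its usual form, and the general statement is contained in \cite{sch17}. By contrast, the reduction via the fibre theorem and the identification $C\cong D$ in the first two steps are routine.
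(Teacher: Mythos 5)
Your reduction via the fibre theorem and the identification of each nontrivial fibre quotient $C$ with a one-variable localized algebra $D=\dR[t,h^{-1}]$ (with $h$ a sum of two squares in $\dR[t]$) matches the paper exactly. Where you diverge is in how you handle this base case. The paper does \emph{not} attempt to produce a representing measure directly; it shows $D_+ = \Sigma D^2$ by the simple algebraic trick of multiplying a psd element $f\in D$ by a sufficiently high even power of $h$ to obtain a polynomial $\tilde f = f\cdot h^{2m}\in\dR[t]$, which is then psd on all of $\dR$ and hence (Hilbert) a sum of squares, so that $f=\tilde f\cdot h^{-2m}\in\Sigma D^2$. The moment property for $D$ then follows in one line from Haviland's theorem. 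You instead take Hamburger's theorem for $L|_{\dR[t]}$ and try to enforce compatibility of the representing measure with the values of $L$ on the fractions $s(t-c_i)^{-2}$, $sw^{-N}$ — a genuine strong-moment-problem argument.

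Your route, as you yourself note, has a real gap at exactly the point where determinacy fails: two measures with equal moments need not coincide, and the required ``simultaneous selection'' among the solutions of an indeterminate Hamburger problem is precisely the hard content of the strong Hamburger moment problem. You discharge this by appeal to \cite{sch17}, but the statement you need (the moment property for $\dR[t,h^{-1}]$ with $h$ an arbitrary sum of two squares) is not an off-the-shelf special case of the classical strong Hamburger problem for $\dR[t,t^{-1}]$, and in fact the present Proposition is meant to supply that very result. The paper's psd $=$ sos argument is what lets one bypass indeterminacy entirely: Haviland's theorem requires no selection among representing measures. So while your opening reduction is correct and in line with the paper, the base case as you have written it is circular or at best outsources the crux; you should replace the Hamburger-plus-selection step by the denominator-clearing psd $=$ sos argument.
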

\begin{proof}
Let $\cZ$ be a fixed non-empty fibre set of some fibre ideal
$\cI$ of $B$. By assumption, $\cZ$ is contained in some line,
that is,   there exist $a\in \dR^n$, $a\neq 0$, and $y=(y_1,\dots,y_n)\in \dR^n$
such that $\cZ\subseteq \{y+ta: t\in \dR\}$. For simplicity assume that $a_1\neq 0$. If $\cZ$ consists only of $y$, then all  $x_j$ are constant on $\cZ$. Hence $B/ \widehat{\cI} = \dR$ which obviously has the moment property. Suppose now that $\cZ\neq \{y\}$ and let  $x\in \cZ$, $x\neq y$. Then  $x=y+ta$ with $t\neq 0$, so that $x_j=y_j+a_j a_1^{-1} (x_1-y_1)$. Hence the quotient algebra $C:=B/\widehat{\cI}$ is generated by $x_1$ and  $(\tilde{p}_j(x_1)^2+\tilde{q}_j(x_1))^2)^{-1}$  for certain polynomials $\tilde{p}_j, \tilde{q}_j\in \dR[x_1]$, $j=1,\dots,k$.

The character space of $C$ consists of the point evaluations  at the set $$\widehat{C}=\{t\in \dR:\tilde{p}_j(t)^2+\tilde{q}_j(t)^2\neq 0,  j=1,\dots,k\}.$$ Let $f\in C$ be such that $f(t)\geq 0$ for all $t\in \widehat{C}$. From the definition of $C$ it follows that for sufficiently large $n\in \dN$,
$$
\tilde{f}(x_1):=f(x_1)[(\tilde{p}_1(x_1)^2+\tilde{q}_1(x_1)^2)\cdots(\tilde{p}_k(x_1)^2+\tilde{q}_k(x_1)^2)]^{2m}\in \dR[x_1].$$
Obviously, $\tilde{f}$ is also nonnegative on $\widehat{C}$ and hence on $\dR$.  Therefore, $\tilde{f}\in \sum \dR[x_1]^2$ which implies that
$$
f(x_1)=\tilde{f}(x_1)(\tilde{p}_1(x_1)^2+\tilde{q}_1(x_1)^2)^{-2m}\cdots(\tilde{p}_k(x_1)^2+\tilde{q}_k(x_1^2))^{-2m} \in \sum C^2.
$$

Therefore, it follows from Haviland's theorem \cite[Theorem 1.14]{sch17} that $C=B/\widehat{\cI}$  has the moment property. Since this holds for each fibre, $B$ itself has the moment property by
 Proposition \ref{fibreth1}.
 \end{proof}

Now we
suppose that $A$ is a \emph{finitely generated complex commutative  unital $*$-algebra}, with involution  denoted by $a\to a^*$. We recall some simple facts about $*$-algebras which can be found, for instance, in \cite{sch20}.

 The {\it hermitean part} $$B\equiv A_\her:=\{ a\in A:a^*=a\}$$ of $A$
 is a commutative real unital algebra and each element $a\in A$ can be
 uniquely written as $a=a_1+\ii a_2$, with $a_1,a_2\in B$. Let
 $\Sigma_h A^2$ denote the set of finite sums $\sum_ja_ja_j^*$ of
 hermitean squares, with $a_j\in A$.  Then $\Sigma_h A^2=\Sigma B^2$ (see e.g. \cite[Lemma 2.17]{sch17}).

A linear functional $L:A\to \dC$ is called {\it hermitean} if $L(a^*)=\overline{L(a)}$ for all $a\in A$.  Hermitean functionals on $A$ have real values on $B$. Hence linear functionals $L:B\to \dR$ are in one-to-one correspondence to hermitean linear functionals $L:A\to \dC$.

A linear functional $L:A\to \dC$ is said to be  {\it positive} if $L(a^*a)\geq 0$ for all $a\in A$. Each positive linear functional is hermitean.

 A character of $A$ is a unital $*$-homomorphism of $A$ into $\dC$. The set of characters of $A$ is denoted by $\widehat{A}$. Define \begin{align*}
A_+=\{ a\in A: \chi(a)\geq 0 ~~ \textrm{for all}~~\chi\in \hat{A}\, \}.
\end{align*}
 Characters of $A$ take real values on $B$. The restriction map $\chi\to \chi\lceil B$ is a bijection of $\hat{A}$ on $\hat{B}$ and it maps $A_+$ onto $B_+$.

 A linear functional $L:A\to \dC$ is called a {\it moment functional} if there exists a Radon measure $\mu$ on the locally compact space $\widehat{A}$ such that $L(f)=\int f d\mu$ for all $f\in A$.  Clearly, each moment functional on $A$ is positive. The restriction of a moment functional on $A$ is obviously a moment functional on $B$. Conversely, if we have a moment functional on $B$, then its unique extension to a hermitean functional on $A$ is a moment functional on $A$

We say that $A$ has the {\it moment property} if each positive linear functional $L$ on $A$ is a moment functional on $A$. Since $\Sigma_h A^2=\Sigma B^2$,  $A$ has the moment property if and only if its hermitean part $B$ does.

Summarizing, we have reduced all needed properties and facts on $A$ to
their counterparts on $B$.

The following is a version of {\it Haviland's theorem}. The proof follows from \cite[Theorem 1.14]{sch17} combined with the preceding discussion.
\begin{prop}\label{haviland} Suppose that $A$ is a finitely generated  complex unital $*$-algebra and $B:=A_\her$ denotes its hermitean part.
For a hermitean linear functional $L:A\to \dC$ the following are equivalent.\begin{itemize}
\item[\em (i)]  $L(a)\geq 0$ for $a\in A_+$.
\item[\em (ii)]$L(a)\geq 0$ for $a\in B_+$.
\item[\em (iii)] $L$ is a moment functional on $A$.
\item[\em (iv)]   $L\lceil B$ is a moment functional on $B$.
\end{itemize}
\end{prop}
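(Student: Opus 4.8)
The plan is to establish the cycle of implications
\[
\text{(i)} \;\Rightarrow\; \text{(ii)} \;\Rightarrow\; \text{(iv)} \;\Rightarrow\; \text{(iii)} \;\Rightarrow\; \text{(i)},
\]
with the version of Haviland's theorem for finitely generated real algebras, \cite[Theorem 1.14]{sch17}, as the only nontrivial input; everything else is the dictionary between $A$ and $B=A_\her$ set up above. Recall in particular that $L$ hermitean is uniquely determined by its restriction $L\lceil B$, which is a real-valued linear functional on $B$, so that (i) and (ii) (respectively (iii) and (iv)) should be read as assertions about this single real functional; the substance of (i)$\Leftrightarrow$(ii) and of (iii)$\Leftrightarrow$(iv) is then the passage between $A$-language and $B$-language.

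For (i)$\Rightarrow$(ii): using that the restriction map $\chi\mapsto\chi\lceil B$ is a bijection of $\widehat A$ onto $\widehat B$ and that characters of $A$ take real values on $B$, any $b\in B_+$ satisfies $\chi(b)=(\chi\lceil B)(b)\ge0$ for every $\chi\in\widehat A$, hence $b\in A_+$; so (i) gives $L(b)\ge0$, i.e.\ $L\lceil B$ is nonnegative on $B_+$, which is (ii). For (ii)$\Rightarrow$(iv): this is exactly Haviland's theorem \cite[Theorem 1.14]{sch17} applied to the finitely generated real algebra $B$, whose character set we identify with the real algebraic set $\widehat B=\cZ(J)\subseteq\dR^d$, a locally compact Hausdorff space; a linear functional on $B$ is a moment functional precisely when it is nonnegative on $B_+$. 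For (iv)$\Rightarrow$(iii): starting from a representing Radon measure $\nu$ for $L\lceil B$ on $\widehat B$, push it forward along the inverse $\widehat B\to\widehat A$ of the restriction bijection — a homeomorphism for the topologies induced from the ambient affine spaces — to obtain a Radon measure $\mu$ on $\widehat A$; writing $a=a_1+\ii a_2$ with $a_1,a_2\in B$ and using $\chi(a)=\chi(a_1)+\ii\,\chi(a_2)$ with real values, one gets $\int_{\widehat A}a\,d\mu=L(a_1)+\ii L(a_2)$, which is the value at $a$ of the unique hermitean extension of $L\lceil B$ to $A$; since $L$ is hermitean, this extension is $L$ itself, so $L$ is a moment functional on $A$. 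Finally (iii)$\Rightarrow$(i) is immediate: if $L(a)=\int_{\widehat A}a\,d\mu$ and $a\in A_+$, then $a\ge0$ on $\widehat A$ and hence $L(a)\ge0$.

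There is essentially no obstacle here beyond invoking Haviland's theorem; the only points that demand a little care are that the restriction map $\widehat A\to\widehat B$ is a homeomorphism of locally compact spaces (so Radon measures transport in both directions), and the bookkeeping that, because $L$ is assumed hermitean from the outset, (i)/(ii) and (iii)/(iv) are genuinely conditions on the single real functional $L\lceil B$, not on $L$ and its restriction separately.
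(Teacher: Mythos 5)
Your proof is correct and takes essentially the same approach as the paper, which simply refers to Haviland's theorem (\cite[Theorem 1.14]{sch17}) together with the $A$--$B$ dictionary laid out just before the proposition. You have spelled out the implication cycle explicitly, but the mathematical content (the inclusion $B_+\subseteq A_+$, Haviland applied to $B$, and transport of the representing measure across the homeomorphism $\widehat B\cong\widehat A$ using the unique hermitean extension) is exactly what the paper's one-line proof leaves to the reader.
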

The following simple lemma is used in the next section to describe the fibre sets.
Suppose that $M$ is a set of non-zero complex numbers.
We consider the map
\begin{align*}\Phi\colon M\to\dC,\quad\Phi(w):=\frac{w}{\ov{w}}
\end{align*}  of $M$ to the unit circle $\dT$. For $\theta\in \dT$  let
\begin{align}\label{mtheta}
M_\theta:=\{w\in M:\Phi(w)=\theta\}
\end{align}
denote the fibre set of $\Phi$ at $\theta$.
\begin{lem}\label{prelim}
Let $\theta\in \dT$. We write $ \theta=e^{\ii \, 2\varphi}$ with unique  $\varphi\in [0,\pi)$. Let $\fg_\theta$ denote the line in $\dC$ through the origin and the point $e^{\ii \varphi}$. Then
\begin{align}\label{equi1}
M_\theta =\fg_\theta\cap M=\{u+\ii v\in M: u,v\in \dR, u\sin \varphi=v\cos\varphi\}.
\end{align}
\end{lem}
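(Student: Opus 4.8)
The plan is to compute $\Phi$ in polar coordinates. First I would write an arbitrary nonzero $w\in\dC$ as $w=re^{\ii\alpha}$ with $r>0$ and $\alpha$ ranging over a half-open interval of length $2\pi$; then $\ov{w}=re^{-\ii\alpha}$, so $\Phi(w)=w/\ov{w}=e^{\ii 2\alpha}$. Hence, for $\theta=e^{\ii 2\varphi}$, the condition $\Phi(w)=\theta$ is equivalent to $e^{\ii 2\alpha}=e^{\ii 2\varphi}$, i.e. $2\alpha\equiv 2\varphi\pmod{2\pi}$, i.e. $\alpha\equiv\varphi\pmod{\pi}$. Therefore $w\in M_\theta$ if and only if $w=re^{\ii\alpha}$ with $r>0$ and $\alpha\equiv\varphi\pmod\pi$, i.e. $\alpha\in\{\varphi,\varphi+\pi\}$ modulo $2\pi$.

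Second, I would identify this set of $w$ geometrically. The points $re^{\ii\varphi}$ and $re^{\ii(\varphi+\pi)}=-re^{\ii\varphi}$ with $r>0$ are precisely the two open rays from the origin in the directions $e^{\ii\varphi}$ and $-e^{\ii\varphi}$, and their union together with the origin is exactly the line $\fg_\theta$ through $0$ and $e^{\ii\varphi}$. Thus the $w\in\dC\setminus\{0\}$ with $\Phi(w)=\theta$ are exactly the nonzero points of $\fg_\theta$. Since $0\notin M$ by hypothesis, intersecting with $M$ gives $M_\theta=\fg_\theta\cap M$, which is the first claimed equality.

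Third, for the second equality I would convert "lies on the line through $0$ and $e^{\ii\varphi}$" into Cartesian form: writing $w=u+\ii v$ with $u,v\in\dR$ and $e^{\ii\varphi}=\cos\varphi+\ii\sin\varphi$, the point $(u,v)$ lies on $\fg_\theta$ precisely when $(u,v)$ is a real scalar multiple of $(\cos\varphi,\sin\varphi)$, which holds precisely when the determinant $u\sin\varphi-v\cos\varphi$ vanishes, i.e. $u\sin\varphi=v\cos\varphi$. Combining with the previous step yields the displayed description of $M_\theta$.

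I do not anticipate a genuine obstacle here; the only points requiring a little care are that the step from $2\alpha\equiv 2\varphi\pmod{2\pi}$ to $\alpha\equiv\varphi\pmod{\pi}$ must retain the second solution $\alpha=\varphi+\pi$, and that the choice of the representative $\varphi\in[0,\pi)$ is exactly what makes the assignment $\theta\mapsto\fg_\theta$ well defined. The hypothesis $0\notin M$ enters only to pass from the punctured line to the full line after intersecting with $M$.
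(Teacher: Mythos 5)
Your proof is correct and follows essentially the same route as the paper: compute $\Phi$ in polar coordinates to see that $\Phi(w)=e^{\ii 2\alpha}$, identify the solution set as the punctured line through $0$ and $e^{\ii\varphi}$, and then translate into Cartesian form to get the equation $u\sin\varphi=v\cos\varphi$. The paper's write-up is merely more compressed.
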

\begin{proof} Let $w\in M$ and let $w=re^{\ii\, \psi}$  be the polar decomposition of $w$. Since  $\Phi(w) = e^{\ii\, 2\psi}$, we obtain
\begin{align*}M_\theta=\Phi^{-1}(\theta)=M\cap\>\{\pm re^{\ii \, \varphi}\colon r>0\}\>= M\cap
\{re^{\ii\, \varphi}\colon0\ne r\in\dR\}=M\cap \fg_\theta,
\end{align*}
which gives the first equality of  (\ref{equi1}). The second  follows from the fact $\fg_\theta$ has the equation $u\sin \varphi=v\cos \varphi$, where $u+\ii v\in \dC,u,v\in \dR$.
\end{proof}

\section{Algebras of complex rational functions}\label{algrational}
Let us begin with some notations.

We write $z_j=x_j+\ii\, y_j$ for $j=1,\dots,d$, where $x_j, y_j$ are
real variables and  identify $z=(z_1,\dots,z_d)\in\dC^d$ with $x=(x_1,\dots,x_d,y_1,\dots,y_d)\in\dR^{2d}$. In this manner, we  obtain an $*$-isomorphism of  $*$-algebras $\dC_d[z,\ov{z}]= \dC[z_1,\dots,z_d,\ov{z_1},\dots, \ov{z_d}]$ and $\dC_{2d}[x,y]:=\dC[x_1,\dots, x_d,y_1,\dots,y_d]$, with involution determined by $x_j^*=x_j, y_j^*=y_j$, $j=1,\dots, 2d$. For notational simplicity we will identify the $*$-algebras  $\dC_d[z,\ov{z}]$ and $\dC_{2d}[x,y]$ in the sequel.
The hermitean part of the complex $*$-algebra $\dC_{2d}[x,y]$ is the real algebra $\dR_{2d}[x,y]:=\dR[x_1,\dots,x_d,y_1,\dots,y_d]$.

As discussed in the introduction,
$\{f_1,\dots,f_m\}$ is a finite subset of
$\dC_d[z,\ov{z}]$ and $A$ is the $*$-algebra defined by (\ref{defA}).
The decomposition $f_j=a_j+\ii\, b_j$ of $f_j$,  with $a_j=a_j^*, b_j=b_j^*\in \dC_d[z,\ov{z}]$, gives
\begin{align}
f_j=a_j+\ii b_j,~~ \textrm{ where}~~a_j,b_j\in \dR_{2d}[x,y],~ j=1,\dots,m.
\end{align}
Note that $a_j$ and $b_j$ are real-valued on $\dR^{2d}$, because $a_j^*=a_j, b_j^*=b_j$. Further,
\begin{align}\label{fjinverse}
f_j^{-1}=\frac{a_j-{\ii} b_j}{a_j^2+b_j^2}~~~\textrm{and} ~~(f_j^{-1})^*=(f_j^*)^{-1}=\frac{a_j+{\ii} b_j}{a_j^2+b_j^2} .
\end{align}

The character space of $\dC_d[z,\ov{z}]$ consists of the point evaluations at the points of $\dC^d$. From  the definition (\ref{defA}) of the algebra $A$   it follows that  the character space $\widehat{A}$  is given by the point evaluations at the set
\begin{align*}
\widehat{A} = \{z\in \dC^d: f_j(z)\neq 0,~ j=1,\dots,m\}.
 \end{align*}
 Let $B:=A_\her$ denote the hermitean part of the complex $*$-algebra $A$.
The functions
\begin{align*}
\frac{1}{2}\big((f_j^{-1})^*{+}f_j^{-1}\big)=\frac{a_j}{a_j^2+b_j^2},~\frac{1}{2\ii}\big((f_j^{-1})^* {-}f_j^{-1}\big)=\frac{b_j}{a_j^2+b_j^2},~
f_j^{-1}(f_j^*)^{-1}=\frac{1}{a_j^2+b_j^2}
\end{align*}
belong to $B$.
Then\, $A$\, is the complex  unital $*$-algebra and $B$ is the real unital algebra generated by the hermitean  functions
\begin{align}\label{hermitgen}
x_1,\dots,x_d,y_1,\dots,y_d, ~ \frac{1}{a_1^2+b_1^2},\dots,~\frac{1}{a_m^2+b_m^2}.
\end{align}
Clearly, the characters of $B$ are the point evaluations at the points of
\begin{align*}
\widehat{B} = \{ (x,y)\in \dR^{2d}: a_j(x,y)\neq 0~~\textrm{or}~~ b_j(x,y)\neq 0, ~j=1,\dots,m\}.
\end{align*}
The functions\, $x_j, y_j,\frac{a_j}{a_j^2+b_j^2}, \frac{b_j}{a_j^2+b_j^2},\frac{1}{a_j^2+b_j^2}$\,  are in general unbounded  on  $\widehat{B}$.
But the $2m$  functions
\begin{align*}
g_j(x,y):=\frac{a_j(x,y)^2- b_j(x,y)^2}{a_j(x,y)^2+b_j(x,y)^2}\, ,~ h_j(x,y): =\frac{2a_j(x,y)b_j(x,y)}{a_j(x,y)^2+b_j(x,y)^2}, ~~ j=1,\dots,m,
\end{align*}
belong to $B$ and they are {\it bounded} on the set  $\widehat{B}$. The values of the functions $g_j, h_j$ on $\widehat{B}$ are contained in the interval $[-1,1]$ and   we have
\begin{align}\label{unitcircle}
g_j(x,y)^2+h_j(x,y)^2=1\quad \textrm{for}~~ (x,y)\in \widehat{B}.
\end{align}

Our aim is to apply the fibre theorem to the algebra $B=A_\her$ and the bounded  functions $g_1,\dots,g_m,h_1,\dots,h_m$.

Let us fix a $2m$-tuple $\lambda=(\lambda_1,\dots,\lambda_{2m})\in \dR^{2m}$.
We denote by $\cI(\lambda)$  the ideal in $B$ generated by  the elements
\begin{align*}
g_j-\lambda_j ~~\textrm{and} ~~ h_j-\lambda_{m+j}, \quad j=1,\dots,m.
\end{align*}
and by $\cZ(\cI_\lambda$ the fibre set
\begin{align}\label{zlambda}
\cZ(\cI(\lambda))
=\{\,  (x,y)\in \widehat{B}: g_j(x,y)=\lambda_j,  ~h_j(x,y)=\lambda_{m+j} ~~\textrm{for}~ j=1,\dots,m\}.
\end{align}

Suppose that the fibre set $\cZ(\cI(\lambda))$ is not empty. Let $(x,y)$ be a point of $ \cZ(\cI(\lambda))$. Then, by definition we have $g_j(x,y)=\lambda_j$ and $h_j(x,y)=\lambda_{m+j}$ for $j=1,\dots,m$. Inserting this into (\ref{unitcircle}) we obtain
\begin{align}\label{lambdas}
\lambda_j^2+\lambda_{m+j}^2=1.
\end{align}
Hence for any $j=1,\dots,m$ there exists a unique number $\varphi_j\in [0,\pi)$ such that
\begin{align}
\lambda_j=\cos 2\varphi_j~~\textrm{and}~~\lambda_{m+j}=\sin 2\varphi_j.
\end{align}

We abbreviate $\varphi:=(\varphi_1,\dots,\varphi_m)\in [0,\pi)^m$ and write  $\cI_\varphi:=\cI(\lambda)$.

The following proposition gives  a convenient parametrization of the non-empty fibre sets.

\begin{prop}\label{charma} Let $\lambda \in \dR^{2m}$. Suppose that  the fibre set $\cZ(\cI_\varphi)\equiv\cZ(\cI(\lambda))$ is not empty. Then
\begin{align}
\cZ(\cI_\varphi)=\{ (x,y)\in \widehat{B}: a_j(x,y)\, \sin \varphi_j=b_j(x,y)\, \cos \varphi_j,\, j=1,\dots,m\, \}.\label{zvarphi}
\end{align}
\end{prop}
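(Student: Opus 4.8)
The statement identifies the nonempty fibre set $\cZ(\cI_\varphi)$ over $\lambda\leftrightarrow\varphi$ as the set of $(x,y)\in\widehat{B}$ with $a_j\sin\varphi_j = b_j\cos\varphi_j$ for all $j$. The natural route is to relate the bounded functions $g_j,h_j$ directly to the map $\Phi(w)=w/\ov w$ of Lemma \ref{prelim}, applied pointwise to $w = f_j(z) = a_j(x,y)+\ii\, b_j(x,y)$. Indeed, for a nonzero complex number $w=a+\ii b$ one computes $w/\ov w = (a+\ii b)^2/(a^2+b^2) = (a^2-b^2)/(a^2+b^2) + \ii\,2ab/(a^2+b^2)$, which is exactly $g_j(x,y)+\ii\, h_j(x,y)$ when $w=f_j(z)$. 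So on $\widehat B$ we have $\Phi(f_j(z)) = g_j(x,y)+\ii\, h_j(x,y)$, a point of the unit circle $\dT$ (consistent with \eqref{unitcircle}).

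\textbf{Key steps.} First I would record the identity $\Phi(f_j(z)) = g_j(x,y)+\ii\,h_j(x,y)$ for $(x,y)\in\widehat B$, using \eqref{fjinverse} or a direct computation; this is the only computation needed and it is routine. Second, observe that membership in $\cZ(\cI_\varphi)$ means $g_j(x,y)=\lambda_j=\cos 2\varphi_j$ and $h_j(x,y)=\lambda_{m+j}=\sin 2\varphi_j$ for every $j$, i.e.\ $\Phi(f_j(z)) = \cos 2\varphi_j + \ii\sin 2\varphi_j = e^{\ii\,2\varphi_j} =: \theta_j$ for every $j$. Third, apply Lemma \ref{prelim} with $M = \dC\setminus\{0\}$ (or with $M$ the range of $f_j$ on $\widehat B$): the condition $\Phi(f_j(z))=\theta_j$ is equivalent to $f_j(z)\in\fg_{\theta_j}$, which by \eqref{equi1} is equivalent to $\mathrm{Re}\,f_j(z)\cdot\sin\varphi_j = \mathrm{Im}\,f_j(z)\cdot\cos\varphi_j$, that is, $a_j(x,y)\sin\varphi_j = b_j(x,y)\cos\varphi_j$. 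Intersecting over $j=1,\dots,m$ and recalling that every point of $\cZ(\cI_\varphi)$ already lies in $\widehat B$ gives the claimed description \eqref{zvarphi}. One should check the reverse inclusion too: if $(x,y)\in\widehat B$ satisfies all the linear relations $a_j\sin\varphi_j=b_j\cos\varphi_j$, then $f_j(z)$ lies on $\fg_{\theta_j}$ and is nonzero (since $(x,y)\in\widehat B$ forces $a_j$ or $b_j$ nonzero), hence $\Phi(f_j(z))=\theta_j$, hence $g_j(x,y)=\lambda_j$ and $h_j(x,y)=\lambda_{m+j}$, so $(x,y)\in\cZ(\cI_\varphi)$.

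\textbf{Main obstacle.} There is no serious obstacle; the content is entirely bookkeeping. The one point requiring a little care is matching the angle conventions: Lemma \ref{prelim} writes $\theta = e^{\ii\,2\varphi}$ with $\varphi\in[0,\pi)$ and produces the line $\fg_\theta$ through $e^{\ii\varphi}$ with equation $u\sin\varphi = v\cos\varphi$, while here $\lambda_j = \cos 2\varphi_j$, $\lambda_{m+j}=\sin 2\varphi_j$ with the same range $\varphi_j\in[0,\pi)$ — so the $\varphi_j$ of this section is precisely the $\varphi$ of the lemma applied to $\theta_j = \lambda_j + \ii\lambda_{m+j}$, and the uniqueness of $\varphi_j\in[0,\pi)$ (already noted after \eqref{lambdas}) makes the correspondence unambiguous. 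The only other thing to keep straight is that the relation \eqref{unitcircle} is exactly what guarantees $\lambda_j + \ii\lambda_{m+j}\in\dT$, so that Lemma \ref{prelim} is applicable; this was already extracted as \eqref{lambdas}. Assembling these observations yields \eqref{zvarphi} in a few lines.
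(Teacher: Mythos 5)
Your proof is correct and follows essentially the same route as the paper's: both compute $f_j(z)/\ov{f_j(z)} = g_j(x,y)+\ii\,h_j(x,y)$ on $\widehat{B}$, identify membership in the fibre set with $\Phi(f_j(z))=\theta_j=e^{\ii\,2\varphi_j}$, and then invoke Lemma \ref{prelim} to translate this into the linear conditions $a_j\sin\varphi_j=b_j\cos\varphi_j$. Your explicit check of the reverse inclusion and the remark on angle conventions are welcome clarifications but amount to the same argument.
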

\begin{proof} Let  $ (x,y)\in \widehat{B} $ and $z=x+\ii \, y$. For $j=1,\dots,n$, we compute
\begin{align}\label{quofj}
\frac{f_j(z)}{\ov{f_j(z)}}&=\frac{a_j(x,y)+\ii \, b_j(x,y)}{a_j(x,y)-\ii \,b_j(x,y)}= \frac{a_j(x,y)^2-b_j(x,y)^2}{a_j(x,y)^2+b_j(x,y)^2}+\ii\,\frac{2a_j(x,y)b_j(x,y)}{a_j(x,y)^2+b_j(x,y)^2}\nonumber\\ &=g_j(x,y)+\ii \, h_j(x,y).
\end{align}
Set  $M_j:=\{w=f_j(z): z=x+\ii\, y, (x,y)\in \widehat{B}\, \}$ and $\theta_j=\lambda_j+\ii\, \lambda_{m+j}$.
Comparing (\ref{zlambda}),   (\ref{quofj}) and  (\ref{mtheta}) we conclude that
$\cZ(\cI_\varphi)$ is the intersection of fibre sets $ (M_j)_{\theta_j}$, $j=1,\dots,m$. Therefore, Lemma \ref{prelim} gives (\ref{zvarphi}).
\end{proof}

Note that   equation (\ref{zvarphi})  can be rewritten as
\begin{align}
\cZ(\cI_\varphi)=\{(x,y)\in \widehat{B}: a_j(x,y) =b_j(x,y)  \cot \varphi_j,\, j=1,\dots,m\, \},
\end{align}
with the interpretation that ``$b_j(x,y)=0$"\, if\, $ \varphi_j=0$.

Another useful   parametrization of the non-empty fibre sets is obtained by going back to the complex form. We set
\begin{align*}
\vartheta_j:=\exp (\ii\, \varphi_j),  j=1,\dots,m, ~\vartheta:=(\vartheta_1,\dots,\vartheta_m)~~\textrm{and}~~\cZ_\vartheta:=\cZ(\cI_\varphi).
\end{align*} and rename $\cZ(\cI_\varphi)$ by $\cZ_\vartheta$. Then, since the equation
$a_j(x,y)\sin \varphi_j=b_j(x,y)\cos \varphi_j$ is equivalent to
$f_j(z)\vartheta+\ov{f_j(z)}~ \ov{\vartheta}=0$, the non-empty fibre sets are given by
\begin{align}\label{fribrecomplex}
\cZ_\vartheta=\{(x,y)\in \widehat{B}:~ z=x+\ii\, y, ~f_j(z)\vartheta_j+\ov{f_j(z)}~ \ov{\vartheta_j}=0,~ j=1,\dots,m\, \}.
\end{align}
Here $\vartheta$ is an $m$-tuple of complex numbers $\vartheta_j$ which can be chosen of modulus one and satisfying $\arg \theta_j\subseteq [0,\pi)$.

\begin{thm}\label{fibreth}
Suppose that for each  $\varphi\in [0,\pi)^m$ such that the set $\cZ(\cI_\varphi)$ is not empty the quotient algebra $B/ \widehat{\cI}_\varphi$ has the moment property. Then $B$ and $A$ have  the moment property.
\end{thm}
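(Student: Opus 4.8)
The plan is to invoke the fibre theorem in the form of Proposition \ref{fibreth1}, applied to the algebra $B=A_\her$ together with the $2m$-tuple of functions $(g_1,\dots,g_m,h_1,\dots,h_m)$ constructed above. First I would record that these are legitimate inputs for Proposition \ref{fibreth1}: each $g_j$ and $h_j$ lies in $B$ and, as noted in the discussion preceding the theorem, takes values in $[-1,1]$ on $\widehat{B}$, hence is bounded on the character set. Thus Proposition \ref{fibreth1} applies with $n=2m$ and with $h_1,\dots,h_n$ there replaced by $g_1,\dots,g_m,h_1,\dots,h_m$ here.

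Next I would unwind what condition (ii) of Proposition \ref{fibreth1} amounts to in this situation. Fix $\lambda=(\lambda_1,\dots,\lambda_{2m})\in\dR^{2m}$ with $\cZ(\cI(\lambda))\neq\emptyset$, and pick a point of this fibre. Evaluating (\ref{unitcircle}) there forces $\lambda_j^2+\lambda_{m+j}^2=1$ for every $j$, so there is a unique $\varphi=(\varphi_1,\dots,\varphi_m)\in[0,\pi)^m$ with $\lambda_j=\cos 2\varphi_j$ and $\lambda_{m+j}=\sin 2\varphi_j$; with the notation fixed above this gives $\cI(\lambda)=\cI_\varphi$, $\widehat{\cI}(\lambda)=\widehat{\cI}_\varphi$, and $\cZ(\cI(\lambda))=\cZ(\cI_\varphi)$. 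Conversely, any $\lambda$ with $\lambda_j^2+\lambda_{m+j}^2\neq1$ for some $j$ has empty fibre by (\ref{unitcircle}) and imposes no condition. Hence the set of $\lambda$ with nonempty fibre corresponds bijectively to the set of $\varphi\in[0,\pi)^m$ with $\cZ(\cI_\varphi)\neq\emptyset$, and the hypothesis of the theorem — that $B/\widehat{\cI}_\varphi$ has the moment property for each such $\varphi$ — is exactly condition (ii) of Proposition \ref{fibreth1}.

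Applying Proposition \ref{fibreth1} then yields that $B$ has the moment property. Finally, as recalled in Section \ref{prel}, $\Sigma_h A^2=\Sigma B^2$, so $A$ has the moment property if and only if its hermitean part $B$ does; hence $A$ has the moment property as well, which completes the proof.

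I do not expect a genuine obstacle here: the theorem is a direct specialization of the abstract fibre theorem to the bounded generators $g_j,h_j$, and all the bookkeeping — boundedness of the $g_j,h_j$, the constraint $\lambda_j^2+\lambda_{m+j}^2=1$ on nonempty fibres, and the identification of the nonempty fibres with the parameters $\varphi$ via Proposition \ref{charma} — has already been carried out earlier in the section. The only point needing a moment's care is the remark that fibres with $\lambda_j^2+\lambda_{m+j}^2\neq1$ are automatically empty, so that restricting attention to $\varphi\in[0,\pi)^m$ loses nothing.
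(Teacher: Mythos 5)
Your proof is correct and follows the same route as the paper: apply Proposition \ref{fibreth1} to $B$ with the bounded functions $g_1,\dots,g_m,h_1,\dots,h_m$, use (\ref{unitcircle}) and Proposition \ref{charma} to identify nonempty fibres with parameters $\varphi\in[0,\pi)^m$, and then pass from $B$ to $A$ via $\Sigma_h A^2=\Sigma B^2$. Your write-up is in fact somewhat more detailed than the paper's, which compresses the bookkeeping into a citation of Proposition \ref{charma}.
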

\begin{proof}
By  Proposition \ref{charma} all fibres with non-empty fibre sets are of the form $\cI_\varphi$ for some $\varphi\in [0,\pi)^m$. Using this result  it follows from Proposition \ref{fibreth1} that the real algebra $B$ has the moment property. As discussed in Secrtion \ref{prel} this implies that the complex $*$- algebra $A$ has the moment property as well.
\end{proof}
To apply this theorem it  is useful to know some algebras which have the moment property. This is true if the   algebra has a  single generator (by \cite[Corollary 13.17]{sch17}) or if each fibre set is contained in a straight line (by Proposition \ref{mplines}).
\begin{rem}\label{fraction}
Recall from (\ref{hermitgen}) that the real algebra $B=A_\her$ is generated by $x_j,y_j$ and the fractions $\frac{1}{a_j^2+b_j^2}$,  $j=1,\dots,m$. From (\ref{zvarphi}) it follows  that in the quotient algebra  $B/ \widehat{\cI}_\varphi$,  $a_j$ is a constant multiple of $b_j$ or $b_j$ is a constant multiple of $a_j$. Hence the algebra $B/ \widehat{\cI}_\varphi$  is generated by $x_j,y_j$ and  $\frac{1}{a_j^2}$ {\it  or} $\frac{1}{b_j^2}$, respectively, for $j=1,\dots,m$.
That is,   the denominators of elements of $B/ \widehat{\cI}_\varphi$ are products of squares $a_j^2 $ or $b_j^2$, respectively.

In particular, if all complex polynomials $f_j(z)$ are linear, so are all real polynomials $a_j(x,y)$ and $b_j(x,y)$. Therefore, in this case all  elements of $B/ \widehat{\cI}_\varphi$  are fractions with even powers of {\it linear} polynomials $a_j$ or $b_j$ as denominators.
\end{rem}

\section{Linear Polynomials as Denominators}\label{genbisgaard}

The following example settles the simplest case.
\begin{exm}\label{bisgaard} {\it Case $d=1$}\\
In this example we suppose that $d=1$. Let $u$ be a zero of some polynomial $f_j$, where $j\in \{1,\dots,m\}$.  Upon multiplying $f_j(z)^{-1}\in A$ by the polynomial $f_j(z)(z-u)^{-1}$ we see that $(z-u)^{-1}\in A$. Conversely, by writing $f_j$ as a product of linear factors it follows that $f_j^{-1}$ belongs to an algebra if the inverses $(z-u)^{-1}$ of all zeros $u$ of $f_j(z)$ do. Thus the $*$-algebra $A$ is generated by $\dC[z,\ov{z}]$ and the inverses $(z-u)^{-1}$ for all zeros $u$ of some $f_j(z)$. That is,  we can assume without loss of generality  that $f_j(z)=z-u_j$ for some $u_j\in \dC$ and $j=1,\dots,m$.

Let $u_j=a_j+\ii\, b_j$, where $a_j,b_j \in \dR$. Then $a_j(x,y)=x-a_j, b_j(x,y)=y-b_j$. By Proposition \ref{charma}, each non-empty fibre set $\cZ(\cI_\varphi)$ is of the form
\begin{align*}
\cZ(\cI_\varphi)=\{ (x,y)\in \dR^{2}: (x-a_j)\sin\varphi_j=(y-b_j)\cos \varphi_j, ~ j=1,\dots,m\}.
\end{align*}
Such a set is obviously a subset of some line. Therefore, $B/ \hat{\cI}_\varphi$ has the moment property by Proposition \ref{mplines}. From the fibre theorem (Theorem \ref{fibreth}) it follows that the real algebra $B$, hence the complex $*$-algebra  $A$,\,has the moment property.

Thus, in the case $d=1$, $A$ has always the moment property.  The special case $m=1, f_1(z)=z$ is just Bisgaard's theorem.
\end{exm}

In the rest of this section we assume that all complex polynomials $f_j(z)$ are {\bf linear} in the variables $z_1,\dots,z_d$. Then $a_j(x,y)$ and $b_j(x,y)$ are  linear real polynomials of $x_1,\dots,x_d,y_1,\dots,y_d$ and each equation $a_j(x,y)\, \sin \varphi_j=b_j(x,y)\cos \varphi_j$ defines a hyperplane in $\dR^{2d}.$ Hence all non-empty fibre sets $\cZ(\cI_\varphi)$, defined by  (\ref{zvarphi}), are affine subspaces of $\dR^{2d}$.  Therefore, by Proposition \ref{mplines} and Theorem \ref{fibreth},  if each fibre set $\cZ(\cI_\varphi)$ is contained in a line, then the algebras $B$ and $A$ have the moment property. The same conclusion holds if each non-empty fibre set $\cZ_\vartheta$, defined by (\ref{fribrecomplex}), is contained in a  {\it real line} in $\dC^d$.

\begin{exm}\label{exampled2m4}
Let $d=2$ and $m=4$.  We make
following choice of linear polynomials $f_j=f_j(z_1,z_2)$:
\begin{align*}f_1=z_1+u_1,\ f_2=z_1+u_2,\ f_3=z_1+u_3,\ f_4=z_2,
\end{align*}
where $u_1,u_2,u_3$ are any three complex numbers that do not lie on
a real line in $\dC$.

Let
$\vartheta=(\vartheta_1,\vartheta_2,\vartheta_3,\vartheta_4)$ be a $4$-tuple of complex numbers of modulus one. Then  \begin{align*}
\{ z_1\in \dC: ~f_j(z_1)\vartheta_j+\ov{f_j(z_1)}~ \ov{\vartheta_j}=0,~ j=1,2,3\, \}.
\end{align*}consists of at most one point, since otherwise it would have to be a
line through $u_1,u_2$ and $u_3$. Therefore, the set \begin{align*}
\{( z_1,z_2)\in \dC^2: ~f_j(z_1,z_2)\vartheta_j+\ov{f_j(z_1,z_2)}~ \ov{\vartheta_j}=0,~ j=1,2,3,4\, \},
\end{align*}
 is either empty or a real line in $\dC^2$. Since this set is contained in the fibre set $\cZ_\vartheta$, it follows from the preceding discussion that  $B$ and $A$ have the moment property.
\end{exm}
Exactly the same argument as used in Example \ref{exampled2m4} and combined with the preceding discussion yields the following theorem.
 It may be viewed as one possible generalization of Bisgaard's theorem to higher dimensions.

\begin{thm}\label{mainthmp}
Suppose that $d\geq 2$ and set $m:=3d-2$.  Consider the   polynomials
\begin{align*}
f_{3j-2}(z)=z_j+u_j,~ f_{3j-1}(z)&=z_j+v_j,~ f_{3j}(z)= z_j+w_j\quad(j=1,\dots,d-1),\\~~ &f_{3d-2}(z)=z_d
\end{align*}
in $\dC[z_1,\dots,z_d]$,  wheres $u_j,v_j,w_j$ are
complex numbers such that $u_j,v_j,w_j$ do not lie on a real line in $\dC$ for each $j=1,\dots,d-1$. Then the complex $*$-algebra
\begin{align*}
A:=\dC[z_1,\dots,z_d,\ov{z_1},\dots,\ov{z_d},f_1^{-1},\dots,f_{m}^{-1},(\, \ov{f_1}\,)^{-1},\dots,(\, \ov{f_{m}}\, )^{-1} ],
\end{align*}
has the moment property.
\end{thm}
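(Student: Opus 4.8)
The plan is to reduce Theorem \ref{mainthmp} to the fibre theorem (Theorem \ref{fibreth}) together with Proposition \ref{mplines}, exactly as was done in Example \ref{exampled2m4} for the case $d=2$. So the first step is to write each $f_k$ in the form $f_k = a_k + \ii\, b_k$ with $a_k, b_k \in \dR_{2d}[x,y]$, observe that they are all \emph{affine-linear} in $x_1,\dots,x_d,y_1,\dots,y_d$, and hence each defining equation $a_k(x,y)\sin\varphi_k = b_k(x,y)\cos\varphi_k$ of a fibre set $\cZ(\cI_\varphi)$ cuts out a hyperplane (or all of $\dR^{2d}$) in $\dR^{2d}$. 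Consequently every non-empty fibre set is an affine subspace of $\dR^{2d}$. By Remark \ref{fraction}, in the quotient $B/\widehat{\cI}_\varphi$ the denominators are even powers of linear polynomials, so $B/\widehat{\cI}_\varphi$ is of the type handled by Proposition \ref{mplines}; thus it suffices to show that every non-empty fibre set is contained in a line (equivalently, by \eqref{fribrecomplex}, in a real line in $\dC^d$).

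Next I would exploit the special block structure of the chosen $f_k$. The first $3(d-1)$ polynomials come in triples $f_{3j-2}=z_j+u_j$, $f_{3j-1}=z_j+v_j$, $f_{3j}=z_j+w_j$, each triple depending only on the single complex coordinate $z_j$ ($j=1,\dots,d-1$), and the last one is $f_{3d-2}=z_d$, depending only on $z_d$. Fix $\vartheta = (\vartheta_1,\dots,\vartheta_m)$ of modulus one and consider the fibre set $\cZ_\vartheta$ described by \eqref{fribrecomplex}. For each $j\in\{1,\dots,d-1\}$, the three equations $f_{3j-2+i}(z)\vartheta_{3j-2+i} + \ov{f_{3j-2+i}(z)}\,\ov{\vartheta_{3j-2+i}} = 0$ ($i=0,1,2$) involve only $z_j$; each is the equation of a real line in the $z_j$-plane $\dC$, and if their common solution set contained two distinct points it would be a single real line through the three translated points $-u_j$, $-v_j$, $-w_j$ — forcing $u_j,v_j,w_j$ collinear on a real line in $\dC$, contrary to hypothesis. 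Hence the $z_j$-component of any point of $\cZ_\vartheta$ is uniquely determined, i.e.\ $z_j$ is constant on $\cZ_\vartheta$, for every $j=1,\dots,d-1$. This is the key point and the only place the non-collinearity hypothesis is used.

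It then remains to pin down the remaining coordinate $z_d$. The single equation coming from $f_{3d-2}=z_d$ reads $z_d\vartheta_m + \ov{z_d}\,\ov{\vartheta_m}=0$, which says that $z_d$ lies on a fixed real line through the origin in the $z_d$-plane $\dC$. Combining with the fact that $z_1,\dots,z_{d-1}$ are already constant on $\cZ_\vartheta$, we conclude that $\cZ_\vartheta$ is contained in an affine real line in $\dC^d$ (the direction being the fixed line in the $z_d$-plane, all other coordinates frozen) — or is empty. Having established that every non-empty fibre set lies on a line, Proposition \ref{mplines} gives the moment property for every $B/\widehat{\cI}_\varphi$, and Theorem \ref{fibreth} then yields the moment property for $B$ and hence for the complex $*$-algebra $A$, completing the proof.

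The argument is essentially bookkeeping once the $d=2$, $m=4$ case of Example \ref{exampled2m4} is in hand; the only genuine content is the observation that the block-diagonal structure decouples the fibre equations coordinatewise, so that the ``three points not on a real line'' trick can be applied independently in each $z_j$-plane for $j<d$ while the last generator $z_d$ contributes exactly one degree of freedom. I do not expect a real obstacle here; the mild care needed is to handle the degenerate case where a fibre set is a single point (then the quotient is $\dR$, which trivially has the moment property) and to make sure the passage between the real parametrization \eqref{zvarphi} and the complex one \eqref{fribrecomplex} is invoked correctly, but both are already set up in Section \ref{algrational}.
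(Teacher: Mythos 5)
Your argument is correct and is precisely the argument the paper intends: the paper's proof of Theorem \ref{mainthmp} is the single sentence ``Exactly the same argument as used in Example \ref{exampled2m4} and combined with the preceding discussion,'' and your blockwise application of the three-points-not-on-a-real-line trick in each $z_j$-plane ($j<d$), together with the single degree of freedom contributed by $f_{3d-2}=z_d$, is exactly what that sentence refers to. One small wording slip: Proposition \ref{mplines} is applied to $B$ itself (with $p_j=a_j$, $q_j=b_j$) once all non-empty fibre sets are known to lie on lines, rather than separately to each quotient $B/\widehat{\cI}_\varphi$; but this does not affect the substance of your argument.
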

In the preceding theorem we have shown that there exist $3d-2$
linear polynomials $f_j(z)$  in $\dC[z_1,\dots,z_d]$ such that
the corresponding algebra $A$ has the moment property. The question
arises naturally whether the moment property can also be obtained
with  less than $3d-2$ polynomials that are linear, or even just
irreducible.\smallskip

\textbf{Question:}
\emph{Given $d$, do there exist $m<3d-2$ irreducible polynomials
$f_1,\dots,f_m$ in $\dC[z_1,\dots,z_d]$ such that the complex
$*$-algebra $A$ defined by (1) has the moment property?}
\smallskip

We are going to show (Theorem \ref{mpfails} below) that the answer is
no for linear polynomials and $d=2$. For this we need a few
technical preparations.

Let $x=(x_1,\dots,x_n)$ in the following. For an irreducible
polynomial $p$ in $\dR[x]$, there are several equivalent ways to
express that the hypersurface $p=0$ has Zariski dense $\dR$-points:

\begin{prop}
Let $p\in\dR[x]$ be irreducible. The following properties are
equivalent:
\begin{itemize}
\item[\rm(i)]
The $\dR$-points of the affine hypersurface
$V(p)=\{z\in\dC^n\colon p(z)=0\}$ are Zariski dense in $V(p)$;
\item[\rm(ii)]
the real algebraic set $\{x\in\dR^n\colon p(x)=0\}$ has dimension
$n-1$;
\item[\rm(iii)]
there exist $u,v\in\dR^n$ with $p(u)<0<p(v)$;
\item[\rm(iv)]
the field of fractions of the ring $\dR[x]/(p)$ can be ordered.
\end{itemize}
If these properties are satisfied for $p$, we'll say that the
irreducible polynomial $p$ is real.
\end{prop}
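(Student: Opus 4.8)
\emph{Plan of proof.} The plan is to first dispose of a degenerate possibility, and then arrange (i)--(iv) as a small hub built around statements (i) and (ii), with just one genuinely geometric step.

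First I would factor $p$ over $\dC$. Since $p\in\dR[x]$ is squarefree and complex conjugation permutes its $\dC$-irreducible factors transitively, either $p$ stays irreducible over $\dC$, or $p=\lambda\,g\ov g$ for some $\lambda\in\dR$, $\lambda\ne0$, and some $g$ irreducible over $\dC$ that is not a $\dC$-multiple of a real polynomial. In the latter case, writing $g=a+\ii b$ with $a,b\in\dR[x]$ gives $p=\lambda(a^2+b^2)$, where $a,b$ are coprime (else $p$ is not squarefree) and $b\ne0$; then the real zero set of $p$ is $\{a=0\}\cap\{b=0\}$, of dimension $\le n-2$, so (i) and (ii) fail; $p$ has constant sign on $\dR^n$, so (iii) fails; and $-1=(\ov a/\ov b)^2$ in $\mathrm{Frac}(\dR[x]/(p))$, so (iv) fails. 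Hence all four statements are false and the equivalence holds trivially. So for the rest I would assume $p$ is irreducible over $\dC$; then $V(p)$ is an irreducible affine variety of dimension $n-1$ and $I(V(p))=(p)$ by Hilbert's Nullstellensatz.

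Under that assumption, (i) $\Leftrightarrow$ (ii) is the standard comparison of dimensions: the $\dC$-Zariski closure $W$ of the real algebraic set $\{x\in\dR^n:p(x)=0\}$ is a subvariety of the irreducible variety $V(p)$, and $\dim_\dC W$ equals the (semialgebraic) dimension of that real set; hence $W=V(p)$ exactly when the real dimension is $n-1$. The direction (iii) $\Rightarrow$ (ii) is easy: if the real zero set had dimension $\le n-2$, its complement in $\dR^n$ would be connected, and $p$ could not change sign. For (i) $\Rightarrow$ (iv) I would take a putative relation $-1=\sum_i(\ov{g_i}/\ov h)^2$ in $K:=\mathrm{Frac}(\dR[x]/(p))$ with $\ov h\ne0$, clear denominators to get $h^2+\sum_i g_i^2\in(p)$ in $\dR[x]$, note that evaluation at any real zero of $p$ forces $h$ to vanish there, and then use Zariski density together with $I(V(p))=(p)$ to conclude $h\in(p)$, a contradiction; so $K$ is formally real and hence, by Artin--Schreier, admits an ordering. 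For the converse (iv) $\Rightarrow$ (i) I would argue contrapositively: if the real zero set of $p$ is not Zariski dense in $V(p)$, choose $g\in\dR[x]$ vanishing on it but not on $V(p)$; by the real Nullstellensatz, $g^{2k}+\sum_j s_j^2\in(p)$ for some $k\ge1$ and $s_j\in\dR[x]$, and reducing modulo $p$ gives $-1=\sum_j(\ov{s_j}/\ov g^k)^2$ in $K$, so $K$ is not formally real.

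The step I expect to be the real obstacle is (i) $\Rightarrow$ (iii): converting Zariski density of the real points into a genuine sign change. My argument would run as follows. Since $p$ is irreducible and nonconstant and we are in characteristic zero, some partial derivative $\partial p/\partial x_j$ does not vanish identically on $V(p)$ (otherwise $p\mid\partial p/\partial x_j$ for every $j$, which by degree reasons forces every partial to vanish, so $p$ would be constant). Consequently the singular locus $\{p=0,\ \nabla p=0\}$ is a proper closed subvariety of $V(p)$, of dimension $<n-1$, so its real points form a set of dimension $<n-1$; by (i), read through (i) $\Leftrightarrow$ (ii) as ``$\{p=0\}$ has real dimension $n-1$'', the real zero set is not contained in it, so there is $x_0\in\dR^n$ with $p(x_0)=0$ and $\nabla p(x_0)\ne0$. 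By the implicit function theorem, near $x_0$ the set $\{p=0\}$ is a smooth hypersurface that separates a small ball into two parts on which $p$ takes opposite signs, and this produces the required $u,v$. It is precisely here that characteristic zero is used (to guarantee that the smooth locus of $V(p)$ is dense); the other ingredients---elementary dimension theory and the real Nullstellensatz---enter only as standard facts.
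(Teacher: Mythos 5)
Your proof is correct, and it takes a genuinely different route from the paper's. The paper dispatches the proposition in three lines by citing Bochnak--Coste--Roy: Proposition 2.8.2 for (i)$\Leftrightarrow$(ii) and Theorem 4.5.1 for the equivalence of (ii), (iii) and (iv). You instead give a self-contained argument, and what you do differently is worth recording. You first split off the case where $p$ fails to be absolutely irreducible, noting that then $p=\lambda(a^2+b^2)$ and all four statements fail; this clean reduction guarantees $I(V(p))=(p)$ in the main case, which is exactly what your Nullstellensatz steps and the smooth-point step rely on. For (i)$\Leftrightarrow$(iv) you unfold the real Nullstellensatz and Artin--Schreier in both directions rather than quoting them as a package. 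The most notable deviation is (i)$\Rightarrow$(iii): instead of the Artin--Lang/Tarski transfer apparatus hidden inside BCR Theorem 4.5.1, you use that the singular locus of an absolutely irreducible hypersurface in characteristic zero is a proper subvariety of dimension $<n-1$, so a real zero set of dimension $n-1$ must contain a smooth real point, where a first-order Taylor expansion of $p$ along $\nabla p(x_0)$ produces the sign change. The trade-off is economy versus transparency: the paper's citation is appropriate for a research article, while your proof makes visible exactly which ingredients of real algebraic geometry (real Nullstellensatz, formal reality and orderings, equality of semialgebraic dimension with the dimension of the complex Zariski closure, density of the smooth locus) the proposition actually rests on, and it replaces the one genuinely model-theoretic step with elementary calculus.
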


\begin{proof}
(i) and (ii) are equivalent by the definition of dimension of
real (semi-) algebraic sets \cite[Proposition 2.8.2]{bcr}.
Condition (iv) says that $-1$ is not a
sum of squares in the field of fractions of $\dR[x]/(p)$.
Therefore, the equivalence of (ii), (iii) and (iv) is contained in
\cite[Theorem 4.5.1]{bcr}.
\end{proof}

\begin{prop}\label{notmomentproperty}
Let $h\in\dR[x]$ be a product of irreducible polynomials, each of
which is real.
\begin{itemize}
\item[\rm(a)]
The sos cone $\Sigma A^2$ of the
real algebra $A=\dR[x,h^{-1}]$ is closed in the finest locally
convex topology of $A$.
\item[\rm(b)]
If $n\ge2$ then $A$ does not have the moment property.
\end{itemize}
\end{prop}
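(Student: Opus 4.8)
The plan is to reduce both parts to one elementary principle: along a real hypersurface the lowest-order terms of a sum of squares cannot cancel. First note that every element of $A=\dR[x,h^{-1}]$ can be written $p\,h^{-2k}$ with $p\in\dR[x]$, $k\ge 0$ (multiply top and bottom by $h$ if needed), so that, clearing denominators,
\[
\Sigma A^2=\bigl\{\,g\,h^{-2k}\ :\ g\in\Sigma\dR[x]^2,\ k\ge 0\,\bigr\},\qquad
A_+=\bigl\{\,p\,h^{-2k}\ :\ p\in\dR[x]\ \text{psd on }\dR^n,\ k\ge 0\,\bigr\},
\]
the description of $A_+$ using that $\widehat A$ is homeomorphic to the dense subset $\{x\in\dR^n:h(x)\ne 0\}$ of $\dR^n$. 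Write $h=c\,q_1^{n_1}\cdots q_r^{n_r}$ with $c\in\dR^\times$ and the $q_j$ pairwise non-associate irreducible \emph{real} polynomials. For each $j$ the local ring $\dR[x]_{(q_j)}$ is a discrete valuation ring whose residue field $\operatorname{Frac}(\dR[x]/(q_j))$ is formally real (property (iv) of the preceding proposition), so for $a_1,\dots,a_s\in\dR[x]$ one has $v_{q_j}\!\bigl(\textstyle\sum_i a_i^2\bigr)=2\min_i v_{q_j}(a_i)$. Two consequences will be used: \emph{(C1)} if $g\in\Sigma\dR[x]^2$ and $h^{2k}\mid g$, then in any representation $g=\sum_i a_i^2$ every $a_i$ is divisible by $h^{k}$, whence $g/h^{2k}\in\Sigma\dR[x]^2$; \emph{(C2)} for $g\in\Sigma\dR[x]^2$ each $v_{q_j}(g)$ is even, hence when $g\,h^{-2k}$ is brought into lowest terms its denominator is a square $s^2$ with $s\in\dR[x]$ and its numerator is again a sum of squares.

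\emph{Part (a).} Equip $A$ with the finest locally convex topology. Since $A=\bigcup_d A_d$ for the increasing chain of finite-dimensional subspaces $A_d:=h^{-d}\{p\in\dR[x]:\deg p\le(1+\deg h)d\}$, it suffices to show $\Sigma A^2\cap A_d$ is closed in $A_d$ for every $d$. Given $a=g\,h^{-2k}\in\Sigma A^2\cap A_d$, I would put it in lowest terms; by (C2) its denominator is a square $s^2$, and membership in $A_d$ forces the reduced denominator to divide $h^d$, so $s^2\mid h^d$, while the reduced numerator is a sum of squares of degree at most $(1+\deg h)d$. Since $h^d$ has only finitely many divisors of the form $s^2$, and since each such $s$ is a unit in $A$,
\[
\Sigma A^2\cap A_d\ =\ \Bigl(\bigcup_{s^2\mid h^d}\ s^{-2}\bigl(\Sigma\dR[x]^2\cap\dR[x]_{\le(1+\deg h)d}\bigr)\Bigr)\cap A_d .
\]
Each $\Sigma\dR[x]^2\cap\dR[x]_{\le N}$ is the cone of sums of squares of polynomials of degree $\le N/2$, which is closed in the finite-dimensional space $\dR[x]_{\le N}$ (a standard fact), and $g\mapsto s^{-2}g$ is a linear isomorphism onto a finite-dimensional, hence closed, subspace of $A$; so the set above is closed in $A_d$. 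Thus $\Sigma A^2$ is closed. This reduction to finitely many denominators at each level is the crux of the argument, and it is precisely where the realness of the factors of $h$ enters — through (C2).

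\emph{Part (b).} By (a) we have $\overline{\Sigma A^2}=\Sigma A^2$, and for a finitely generated real algebra the moment property is equivalent to $A_+=\overline{\Sigma A^2}$ (the criterion recalled in the introduction), so it is enough to produce an element of $A_+\setminus\Sigma A^2$. As $n\ge 2$, take $p\in\dR[x_1,x_2]\subseteq\dR[x_1,\dots,x_n]$ positive semidefinite but not a sum of squares — for instance the Motzkin polynomial $x_1^4x_2^2+x_1^2x_2^4-3x_1^2x_2^2+1$, which remains non-SOS in $\dR[x_1,\dots,x_n]$ because any representation would force all summands into $\dR[x_1,x_2]$. Then $p\in A_+$ since $p$ is psd on $\dR^n$. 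If $p$ lay in $\Sigma A^2$, then $p\,h^{2k}\in\Sigma\dR[x]^2$ for some $k$, and (C1) would give $p=(p\,h^{2k})/h^{2k}\in\Sigma\dR[x]^2$, a contradiction. Hence $p\in A_+\setminus\Sigma A^2$, and $A$ does not have the moment property. (Equivalently: $\Sigma A^2$ being a closed convex cone with $p\notin\Sigma A^2$, Hahn--Banach produces a linear functional $L$ on $A$ nonnegative on all squares with $L(p)<0$; as $p\in A_+$, such $L$ is not a moment functional. If $h$ had a non-real irreducible factor the argument collapses: for $h=x_1^2+x_2^2$ one has $h^2p\in\Sigma\dR[x]^2$ for the Motzkin polynomial $p$, and both (C1) and the conclusion fail.)
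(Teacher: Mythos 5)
Your proof is correct and rests on the same two pillars as the paper's: the fact (your (C1), the paper's ``initial remark'') that if $g\in\Sigma\dR[x]^2$ and $h^{2k}\mid g$ then every square in the representation is divisible by $h^k$, so $g/h^{2k}$ is again sos --- this is where the realness of the irreducible factors of $h$ enters --- and, for (b), the Motzkin polynomial together with the criterion that the moment property is equivalent to $A_+=\overline{\Sigma A^2}$. Part (b) is essentially identical to the paper's argument. The difference is in the execution of part (a). The paper fixes $m,r$ and the subspace $U=\{p\,h^{-2m}\colon\deg p\le r\}$, observes that $\phi\colon U\to\dR[x]_{\le r+2m\deg h}$, $f\mapsto fh^{2m}$, is a linear bijection onto its image, and that $U\cap\Sigma A^2=\phi^{-1}\bigl(\Sigma\dR[x]^2\cap\dR[x]_{\le r+2m\deg h}\bigr)$, which is closed since the truncated sos cone in polynomials is closed; no reduction to lowest terms is needed. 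You instead exhaust $A$ by the subspaces $A_d$, reduce each $gh^{-2k}\in\Sigma A^2\cap A_d$ to lowest terms, invoke your observation (C2) that the reduced denominator is a square $s^2$ dividing $h^d$, and express $\Sigma A^2\cap A_d$ as a finite union over such $s$ of images of truncated polynomial sos cones. This works, and (C2) is a nice additional structural remark, but it is more elaborate than necessary: the paper's linear-preimage argument sidesteps lowest-terms bookkeeping, the finite union, and the evenness-of-valuation claim entirely, since one may keep the fixed denominator $h^{2m}$ throughout. Both versions ultimately reduce to the closedness of $\Sigma\dR[x]^2\cap\dR[x]_{\le N}$ in a finite-dimensional space, applied via \cite[Prop.~A.28]{sch17}.
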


\begin{proof}
Let $g_1,\dots,g_r\in\dR[x]$ such that $h$ divides
$g=\sum_{i=1}^rg_i^2$, and let $p$ be an irreducible factor of $h$.
Then $p$ divides each single $g_i$ since the field of fractions of
$\dR[x]/(p)$ is real. Writing $g_i=pq_i$ with polynomials $q_i$, we
conclude that $g/p^2=\sum_{i=1}^rq_i^2$ is again a sum of squares of
polynomials. Iterating this argument shows that $g/h^2$ is a sum of
squares of polynomials.

(a): Fix integers $m,\,r\ge0$ and consider the linear subspace
\begin{align}\label{defU}U\>=\>\{ph^{-2m}\colon p\in\dR[x],\ \deg(p)\le r\}
\end{align}
of $A$.
Let $d=\deg(h)$. The map
$\phi\colon U\to\dR[x]_{\le(r+2md)}$, $f\mapsto fh^{2m}$ is linear.
Clearly, an element $f=ph^{-2m}\in U$ is sos in $A$ if and only if
(the polynomial) $p=\phi(f)$ is sos in $A$. By the initial remark,
the latter property is equivalent to $p=\phi(f)$ being a sum of
squares of polynomials. This means that $U\cap\Sigma A^2$ is the
preimage of the (truncated) sos cone in $\dR[x]_{\le(r+2md)}$ under
the linear map $\phi$. Since the sos cone $\Sigma\dR[x]^2$ in
$\dR[x]$ is closed,
this implies that $U\cap\Sigma A^2$ is closed in $U$. Since each
finite-dimensional subspace of $A$ is contained in a space $U$
as in (\ref{defU})  for some $m,r$, it follows that $\Sigma A^2$
is closed in the finest locally convex topology (by
\cite[Proposition A.28]{sch17} or \cite[Section 3.6]{marshall1}).

(b): If $n\ge2$,
there exist psd polynomials $p\in\dR[x]$ that are not sums of
squares of polynomials, for example the Motzkin polynomial.  By
the above remark, such $p$ is not a sum of squares in $A$ either.
Thus, $p\in A_+$ and $p\notin \Sigma A^2$. Since $\Sigma A^2$ is
closed by (i), $A_+$ is not the  closure of $\Sigma A^2$. Hence
$A$ does not have the moment property \cite[Proposition 13.9(ii)]{sch17}.
\end{proof}

\begin{thm} \label{mpfails}
Suppose that $d=2$ and  $m=3$. Let  $f_1,f_2,f_3\in\dC[z_1,z_2]$
be non-constant linear polynomials. Then the corresponding algebras $B$ and $A$ do not have the moment property.
\end{thm}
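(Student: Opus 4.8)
The plan is to use the fibre theorem in the reverse direction. By Proposition~\ref{fibreth1}, together with Proposition~\ref{charma} (which identifies the non-empty fibre sets of the bounded generators $g_j,h_j$ with the sets $\cZ_\vartheta$ of~(\ref{fribrecomplex})), the algebra $B$ has the moment property if and only if $B/\widehat{\cI}_\vartheta$ does for every $\vartheta=(\vartheta_1,\vartheta_2,\vartheta_3)\in\dT^3$ with $\cZ_\vartheta\neq\varnothing$. Here $\cZ_\vartheta=\{z\in\widehat B:\mathrm{Re}(\vartheta_jf_j(z))=0,\ j=1,2,3\}$ (rewrite (\ref{fribrecomplex}) using $f_j\vartheta_j+\ov{f_j}\,\ov{\vartheta_j}=2\,\mathrm{Re}(\vartheta_jf_j)$), and $\widehat B=\{z\in\dC^2:f_j(z)\neq0,\ j=1,2,3\}$. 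Thus it suffices to produce one such $\vartheta$ for which $B/\widehat{\cI}_\vartheta$ has no moment property; then $B$, hence $A$, has none either (Theorem~\ref{fibreth} and the reduction of Section~\ref{prel}). Since an invertible $\dC$-affine change of the variables $z_1,z_2$ induces a $*$-isomorphism of $A$ onto the algebra belonging to the transformed $f_j$, and so preserves the moment property, I would first normalize $f_1,f_2,f_3$. Their homogeneous linear parts span a subspace of $\dC^2$ of dimension $1$ or $2$ (not $0$, as the $f_j$ are non-constant): in the first case one may assume $f_j=c_jz_1+\gamma_j$ with $c_j\in\dC^\times$, and in the second, after re-indexing, $f_1=z_1$, $f_2=z_2$, $f_3=\alpha z_1+\beta z_2+\gamma$.

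The underlying mechanism is as follows. Fix $\vartheta$ and let $v_j$ be the homogeneous part of the affine-linear function $\mathrm{Re}(\vartheta_jf_j)$. As $\vartheta_j$ ranges over $\dT$, the line $\dR v_j$ ranges over all lines of the fixed real $2$-plane $P_j\subset\dR^4$ of forms $z\mapsto\mathrm{Re}(c\,q_j)$, $c\in\dC$, where $q_j$ is the linear part of $f_j$. An elementary argument about three $2$-planes in $\dR^4$ shows that one can always pick non-zero $v_j\in P_j$ that are linearly dependent; realizing their directions by phases $\vartheta_j$ forces $\mathrm{rank}\{v_1,v_2,v_3\}\leq2$, so that the affine subspace $E_\vartheta:=\{z\in\dC^2:\mathrm{Re}(\vartheta_jf_j(z))=0,\ j=1,2,3\}$, once non-empty, has real dimension $k\geq2$. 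On $E_\vartheta$ one has $\vartheta_jf_j\in\ii\dR$, so $|f_j|^2$ restricts to $\ell_j^2$ with $\ell_j:=\mathrm{Im}(\vartheta_jf_j)$ affine-linear. Hence, in suitable affine coordinates $t_1,\dots,t_k$ on $E_\vartheta\cong\dR^k$ the quotient $B/\widehat{\cI}_\vartheta$ becomes $\dR[t_1,\dots,t_k,\ell_1^{-2},\ell_2^{-2},\ell_3^{-2}]=\dR[t_1,\dots,t_k,h^{-1}]$, where $h$ is the product of the pairwise non-proportional non-constant functions among $\ell_1,\ell_2,\ell_3$ — a product of \emph{real} irreducible (indeed linear) polynomials, possibly the empty product. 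Since $k\geq2$, Proposition~\ref{notmomentproperty}(b) then gives that $B/\widehat{\cI}_\vartheta$ has no moment property.

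The real work, and the step I expect to be the main obstacle, is to arrange $\cZ_\vartheta\neq\varnothing$ together with the rank condition: one must make the affine system $\mathrm{Re}(\vartheta_jf_j)=0$ consistent and ensure $E_\vartheta\not\subseteq\{f_j=0\}$ for each $j$, so that $\cZ_\vartheta=E_\vartheta\cap\widehat B$ is a non-empty Zariski-open subset of $E_\vartheta$. This is where the normalization is used. In the dimension-$1$ case all three conditions live in the $z_1$-plane and describe lines through the points $u_j:=-\gamma_j/c_j$; choosing the $\vartheta_j$ so that these lines either all coincide (possible when $u_1,u_2,u_3$ are collinear) or are concurrent at a point $p\notin\{u_1,u_2,u_3\}$ (otherwise) yields $E_\vartheta$ of dimension $3$, resp.\ $2$, contained in no $\{f_j=0\}$, with quotient $\dR[t,x_2,y_2,\prod_j(t-t_{u_j})^{-1}]$, resp.\ $\dR[x_2,y_2]$. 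In the dimension-$2$ case the rank-$2$ choices form a family along which the direction of $v_3$ sweeps out all of $P_3$; one may therefore take $\vartheta_3$ perpendicular to $\gamma$, which makes the three constants $\mathrm{Re}(\vartheta_j\gamma_j)$ all vanish, so that $E_\vartheta$ is a genuine $2$-plane through the origin. A short computation shows that the normal space $\mathrm{span}\{v_1,v_2\}$ of $E_\vartheta$ equals none of $P_1,P_2,P_3$, which forces $E_\vartheta\not\subseteq\{f_j=0\}$ for $j=1,2,3$, hence $\cZ_\vartheta\neq\varnothing$. In all cases $B/\widehat{\cI}_\vartheta$ lacks the moment property, so by Proposition~\ref{fibreth1} so does $B$, and thus so does $A$.
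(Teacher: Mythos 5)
Your proof is correct and rests on the same two pillars as the paper's: reducing to a single bad fibre via Proposition~\ref{fibreth1}/Theorem~\ref{fibreth}, and then invoking Proposition~\ref{notmomentproperty}(b) for algebras $\dR[t_1,\dots,t_k,h^{-1}]$ with $k\ge2$ and $h$ a product of real linear factors. Where you differ from the paper is in how the bad fibre is produced. The paper first splits off the degenerate case (all linear parts $g_j$ proportional) and disposes of it by a tensor-product argument: $A$ then contains $\dC[z_2,\ov{z_2}]$ as a tensor factor, and the latter already fails the moment property. In the non-degenerate case the paper normalizes very aggressively --- not only $f_1=z_1$, $f_2=z_2$, but also the \emph{constant term} of $f_3$ to the special form $(1+\ii)t$ with $t\in\dR$ --- precisely so that the uniform choice $\varphi_1=\varphi_2=\varphi_3=\pi/4$ makes the third fibre equation $s_3a_3=c_3b_3$ a consequence of the first two, giving a $2$-dimensional fibre with no further case analysis. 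You instead argue intrinsically: the linear parts $v_j=\mathrm{Re}(\vartheta_jq_j)$ of the fibre conditions live in fixed $2$-planes $P_j\subset(\dR^4)^*$, and an elementary fact about three $2$-planes in $\dR^4$ lets you force $\mathrm{rank}\{v_1,v_2,v_3\}\le2$; you then use the remaining freedom in the $\vartheta_j$ to kill the constant terms (or, in the proportional case, to align the three lines in the $z_1$-plane) and thereby guarantee $\cZ_\vartheta\neq\varnothing$. This is a genuinely different, more geometric route: it avoids the paper's somewhat ad hoc normalization of the constant of $f_3$ and also dispenses with the separate tensor-product argument, treating all cases uniformly by the fibre method, at the price of a case analysis (collinear vs.\ non-collinear $u_j$, $\gamma=0$ vs.\ $\gamma\neq0$) and of claims (the $2$-plane lemma, the ``short computation'' that $\mathrm{span}\{v_1,v_2\}\ne P_j$) that you assert but do not prove. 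Those claims are in fact correct and elementary, so I regard your argument as complete in outline, but a reader would want to see them spelled out; the paper's explicit normalization buys it a shorter verification, whereas your approach would generalize more readily to a higher-dimensional statement.
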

\begin{proof} First we recall that $B$ satisfies the moment property if and only $A$ does, as  discussed in the introduction.
Write $f_j(z)=g_j(z)+u_j$ with $0\ne g_j\in\dC[z_1,z_2]$
linear homogenous and $u_j\in\dC$.

First we treat the simple case when the linear forms $g_1,g_2,g_3$
are complex scalar multiples of each other. Then these forms are multiples of one linear combination of $z_1,z_2$, say $z_1$, without loss of generality. Hence $A$ is the tensor product of functions in $z_1,\ov{z_1}$ and the algebra $\dC[z_2,\ov{z_2}]$. Since $\dC[z_2,\ov{z_2}]$ does not  have the moment property (see e.g. \cite[Proposition 15.1]{sch17}),  neither does $A$.

From now on we suppose that $g_1,g_2,g_3$
are not complex multiples of each other. Then
 we may assume without loss of generality that
$g_1,g_2$ are linearly independent. Upon a suitable affine-linear
change of variables, we can assume that $f_1=z_1$ and $f_2=z_2$. Then $f_3$ is of the form
$f_3=uz_1+vz_2+w$ with $u,v,w\in\dC$. Since $f_3$ is not constant by assumption, $u\neq 0$ or $v\neq 0$. Without loss of generality we  assume that $u\neq0$; otherwise we interchange the variables. Multiplying $f_3$ by some complex number of modulus one, we can have that $w=(1+\ii )t$ for some $t\in \dR$. Further, scaling $z_1$ and $z_2$
suitably, we can  assume that $f_3$ is either $z_1+(1+\ii )t$ or
$z_1+z_2+(1+\ii )t$. These  changes replace $f_1, f_2 ,f_3$ by certain  complex multiples of $f_1,f_2,f_3$, respectively. Since these transformations do not change the corresponding algebra $A$, we can assume without loss of generality that $f_1,f_2,f_3$ are of this special  form.

Thus we have
\begin{align*}f_1(z)=z_1=x_1+\ii y_1,\,  f_2(z)=z_2=x_2+\ii y_2
\end{align*}
and concerning $f_3$ there are the following two cases:
\begin{align*}
\textrm{Case}~ 1:~~ & f_3(z)=z_1 +(1+\ii) t=(x_1+t)+\ii \, (y_1+t),\\
\textrm{Case}~ 2:~~& f_3(z)=z_1+z_2+(1+\ii) t=
(x_1+x_2+t)+\ii\, (y_1+y_2+t).
\end{align*}

To describe the fibre sets we abbreviate $s_j=\sin \varphi_j, c_j=\cos\varphi_j$ for $j=1,2,3$.
Then, by (\ref{zvarphi}),  the corresponding (non-empty) fibre sets  $\cZ(\cI_\varphi) $ are of the form \begin{align}\label{zlambda1}
\cZ(\cI_\varphi)=\{(x,y)\in \widehat{B}: s_1x_1=c_1y_1, s_2x_2=c_2y_2,s_3(x_1+t)=c_3 (y_1+t)\, \}.
 \end{align}
in Case 1 and
\begin{align}\label{zlambda2}
\cZ(\cI_\varphi)=\{(x,y)\in \widehat{B}: s_1x_1=c_1y_1, s_2x_2=c_2y_2,s_3(x_1+x_2+t)=c_3 (y_1+y_2+ t)\, \}.
 \end{align}
in Case 2.

 We set $\varphi_j:=\frac{\pi}{4}$ for $j=1,2,3,$ so that $s_j=c_j$. Then $x_1=y_1, x_2=y_2$  by the first two equation in (\ref{zlambda1})  and (\ref{zlambda2}), respectively, so the corresponding third equations in (\ref{zlambda1}) and (\ref{zlambda2}) are automatically fulfilled. That is,  the three equations in (\ref{zlambda1}) and in  (\ref{zlambda2}), respectively, are equivalent to the two equations $x_1=y_1$, $x_2=y_2$.

 Therefore, using Remark \ref{fraction}
we conclude that the fibre algebra $B/\widehat{\cI}_\varphi$ is  the algebra of rational functions on $\dR^2$ generated by $\dR[x_1,x_2]$ and the inverses $x_1^{-2}, x_2^{-2}$ and $(x_1+t)^{-2}$ in Case 1 and $(x_1+x_2+t)^{-2}$ in Case 2. Thus Proposition \ref{notmomentproperty} applies and it follows that this fibre algebra  $B/\widehat{\cI}_\varphi$ does not have the moment property. Then, by the implicaton (i)$\to$(ii) in Theorem \ref{fibreth}, $B$ and hence $A$ do not have the moment property.
 \end{proof}

\section{Positive semidefinite elements and sums of squares}\label{psdsos}
In this section we develop some results from real algebra. For three interesting real algebras it is shown that all positive semidefinite elements are sums of squares.  In particular, this implies that these algebras
satisfy the moment property by Havilands theorem. These three algebras will appear as  hermitean parts of  three semigroup $*$-algebras in the next section, see formulas (\ref{b1})--(\ref{b3}) below.

Let $A$ be a unital real algebra that is finitely generated. Recall that $A_+$ is the set of positive semidefinite, short psd, elements of $A$.
We say that $A$ satisfies $\rm psd=sos$ if every psd element in $A$
is a sum of squares of elements in $A$, or equivalently, if $\Sigma A^2=A_+$.

\begin{thm}\label{bisg}
The real algebra $A=\dR[x,y,\frac1{x^2+y^2}]$ satisfies $\rm psd=sos$:
Every element in $A$ that is non-negative on $\dR^2$ minus the origin
is a sum of squares of elements of~$A$.
\end{thm}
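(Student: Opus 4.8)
The plan is to strip off the formal ($*$-algebra) part of the statement, reduce to a clean multiplier question for psd polynomials, and then settle that by homogenizing and invoking the local--global machinery for sums of squares on surfaces. \textbf{Step 1 (reduction).} Write a general element of $A$ as $a=p\cdot(x^2+y^2)^{-n}$ with $p\in\dR[x,y]$ and $n\ge 0$. Then $a\in A_+$ iff $p=a(x^2+y^2)^n$ is $\ge0$ on $\dR^2\backslash\{0\}$, hence, by continuity, psd on all of $\dR^2$. Now $x^2+y^2$ is a unit of $A$ lying in $\Sigma A^2$, and $(x^2+y^2)^{-1}=\big(\frac{x}{x^2+y^2}\big)^2+\big(\frac{y}{x^2+y^2}\big)^2$ also lies in $\Sigma A^2$; so membership in $\Sigma A^2$ is unchanged by multiplication by $(x^2+y^2)^{\pm1}$, and clearing denominators shows $p\in\Sigma A^2$ iff $(x^2+y^2)^Np\in\Sigma\dR[x,y]^2$ for some $N\ge0$ (and once this holds for one $N$ it holds for $N+2$). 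Thus Theorem \ref{bisg} is equivalent to the purely polynomial assertion $(\ast)$: \emph{every psd $p\in\dR[x,y]$ satisfies $(x^2+y^2)^Np\in\Sigma\dR[x,y]^2$ for $N\gg0$}.

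\textbf{Step 2 (homogenization).} We may take $p$ non-constant; then $\deg p=2d$ is even and the leading form of $p$ is a psd binary form. Let $P(x,y,z)=\sum_iz^{2d-i}p_i$ be the degree-$2d$ homogenization of $p=\sum_ip_i$. Being the homogenization of a psd polynomial of even degree, $P$ is a psd ternary form, and hence so is $G_N:=(x^2+y^2)^NP$. Dehomogenizing at $z=1$, $(\ast)$ follows once we know that $G_N$ is a sum of squares of ternary forms for $N\gg0$.

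\textbf{Step 3 (core step).} Here I would apply the local--global principle for sums of squares on real algebraic surfaces (Scheiderer), on the projective plane with the psd ``denominator'' $q=x^2+y^2$: one must check that $G_N=q^NP$ satisfies the relevant local sums-of-squares condition at each real projective zero of $q^NP$. The form $q$ has its only real zero at $[0:0:1]$; in the chart $z=1$ the germ of $G_N$ at the origin is $(x^2+y^2)^Np$, whose lowest-degree part is $(x^2+y^2)^N$ times the lowest-degree homogeneous part of $p$, a psd binary form and hence a sum of two squares — and $(x^2+y^2)^N$ is itself a sum of squares of forms — so $G_N$ is locally a sum of squares at $[0:0:1]$. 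At any other real zero $\xi$ of $P$, $q$ is a positive unit of the local ring and the lowest-degree part of $P$ at $\xi$ is a psd form in the two local coordinates, hence again a sum of two squares; so $G_N$ is locally a sum of squares at $\xi$ as well. With all local obstructions vanishing, the principle yields $q^NP\in\Sigma\dR[x,y,z]^2$ for $N$ large, proving $(\ast)$; thus $\Sigma A^2=A_+$, and the moment property follows from Haviland's theorem (Proposition \ref{haviland}).

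\textbf{Main obstacle.} The delicate point is Step 3: singling out the precise form of the surface local--global statement that applies — in particular passing to a \emph{nonsingular} compactification (blow up the origin of the projective plane, so that $\dR^2\backslash\{0\}$ has complement the disjoint union of the strict transform of the line at infinity and the exceptional curve) and using the correct notion of ``local condition'' — and confirming that its hypotheses genuinely hold here. If a self-contained argument is preferred, the alternative is to prove $(\ast)$ directly via the fibration of $\dR^2\backslash\{0\}$ over the level sets $\{x^2+y^2=t\}$, $t>0$: each fibre is a circle, on which $\mathrm{psd}=\mathrm{sos}$ with uniformly bounded complexity, and the real work is then to glue the fibrewise representations into one sum-of-squares identity depending polynomially on $t$ — a Fejér--Riesz / spectral-factorization argument in the single parameter $t$, the main difficulty being uniformity near the values of $t$ at which $p$ acquires a real zero.
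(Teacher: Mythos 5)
Your Steps 1 and 2 reproduce exactly the paper's reduction: the claim is equivalent to the statement $(\ast)$ that for every psd $p\in\dR[x,y]$ some power $(x^2+y^2)^N p$ is a sum of squares of polynomials, and both are equivalent (via homogenization) to the ternary-form statement that for every psd form $q(x,y,z)$ there is $N$ with $(x^2+y^2)^N q\in\Sigma\dR[x,y,z]^2$. The paper then simply observes that this homogenized statement is equivalent to Theorem~5.2 of \cite{ksv}, which asserts that $(1+x^2)^m p$ is sos for some $m$, and that the proof there rests on the cylinder theorem (Theorem \ref{cylthm}, from \cite{sw}). So the cited denominator result does the real work, and the core of your proposal --- Step 3 --- is where you diverge.

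Step 3, as written, has a genuine gap, and it is not just the acknowledged subtlety about compactifications. The local verification you carry out --- that the lowest-degree part of $G_N$ at each real zero is a psd binary form, hence a sum of two squares --- is exactly the same for every $N\ge0$, including $N=0$. If the local--global principle applied in the form you use and your local condition were the right one, the argument would therefore already yield that $P=G_0$ itself is sos, i.e.\ that every psd ternary form is a sum of squares of forms; this is false (Motzkin). The point is that ``lowest-degree part is sos'' is far weaker than the genuine local condition (being a sum of squares in the completed local ring, or the precise membership condition in the local preordering that Scheiderer's theorems require), and psd power-series germs in two variables are not automatically sums of squares. Moreover, the role of the power $N$ never actually enters your verification, so there is nothing in the argument that could produce the ``for $N\gg0$'' in the conclusion. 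Your alternative sketch (Fej\'er--Riesz on the circle fibres and gluing polynomially in the radius) is closer to being salvageable, but it is essentially a restatement of the cylinder theorem, whose proof is nontrivial; the paper instead cites \cite{sw} for precisely this. To repair Step 3 you would either need to state and justify the exact local--global result you intend (with the correct local condition, a precise treatment of the blowup of $\mathbb{P}^2$ at $[0:0:1]$, and an explanation of why the power $N$ makes the local obstruction disappear), or follow the paper and reduce to the cylinder theorem.
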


\begin{proof}
The ring $A$ consists of all fractions $\frac{p(x,y)}{(x^2+y^2)^m}$
where $p\in\dR[x,y]$ and $m\ge0$. To prove the assertion we
have to show: For every psd polynomial $p(x,y)$ in $\dR[x,y]$ there
is an integer $m\ge0$ such that the polynomial $(x^2+y^2)^mp(x,y)$ is
a sum of squares in $\dR[x,y]$. We claim that this assertion is
equivalent to the statement of \cite{ksv} Theorem 5.2, according to
which there exists $m\ge0$ such that $(1+x^2)^mp(x,y)$ is sos.
Indeed, both statements are equivalent to their common homogenized
version: For every homogeneous psd polynomial $q(x,y,z)$ in three
variables, there exists $m\ge0$ such that the polynomial
$(x^2+y^2)^mq(x,y,z)$ is a sum of squares in $\dR[x,y,z]$.
\end{proof}

The proof, as given in \cite{ksv}, rests on the cylinder theorem:

\begin{thm}\label{cylthm}
The real algebra $\dR[x,y,z]/(1-x^2-y^2)$ satisfies $\rm psd=sos$.
\end{thm}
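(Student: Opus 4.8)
The plan is to reduce the assertion to a statement about sums of squares on the affine circle, where the situation is completely understood, handling the extra polynomial variable by homogenization.

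Write $R:=\dR[x,y,z]/(1-x^2-y^2)$, and let $R_0:=\dR[x,y]/(1-x^2-y^2)$ be the coordinate ring of the unit circle $S^1=\{x^2+y^2=1\}$, so that $R=R_0[z]$ and the character space of $R$ is the cylinder $S^1\times\dR$. The basic input is that $R_0$ itself satisfies $\mathrm{psd}=\mathrm{sos}$, with two squares enough: an element $g\in R_0$ restricts on $S^1$ to a real trigonometric polynomial $G(\theta)=g(\cos\theta,\sin\theta)$, and if $g\geq 0$ on $S^1$ the Fej\'er--Riesz theorem gives $G=|P(e^{\ii\theta})|^2=(\Re P(e^{\ii\theta}))^2+(\Im P(e^{\ii\theta}))^2$ for some complex polynomial $P$; since $\Re P(e^{\ii\theta})$ and $\Im P(e^{\ii\theta})$ are again trigonometric polynomials, they are restrictions of elements $a,b\in R_0$, and $g=a^2+b^2$ in $R_0$ because $R_0$ is a domain of functions on the Zariski dense set $S^1$.

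Next I would normalise the given $f\in R_+$. Since $f\geq 0$ on $S^1\times\dR$, its $z$-degree is some even number $2m$, and its leading coefficient $a_{2m}\in R_0$ is $\geq0$ on $S^1$ (examine $z\to\pm\infty$ in each fibre). Introducing an auxiliary variable $w$ and homogenising in $(z,w)$, set $F:=w^{2m}f(x,y,z/w)\in R_0[z,w]$, a binary form of degree $2m$ in $(z,w)$ with coefficients in $R_0$ that is psd on $S^1\times\dR^2$. It then suffices to write $F$ as a sum of squares of binary forms of degree $m$ in $(z,w)$ over $R_0$, since putting $w=1$ exhibits $f$ as a sum of squares in $R=R_0[z]$. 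So the core statement becomes: a binary form $F\in R_0[z,w]$ of even degree that is psd at every real point of $S^1$ is a sum of squares of binary forms over $R_0$. To get a first decomposition I would pass to the function field $K_0=\dR(S^1)$; since $S^1$ has the rational point $(1,0)$ one has $K_0\cong\dR(t)$, in which every element positive at all orderings is a sum of two squares (for totally positive $p/q$ the polynomial $pq$ is nonnegative, hence $pq=u^2+v^2$ and $p/q=(u/q)^2+(v/q)^2$). Factoring $F$ over a real closure, the even powers of real linear factors contribute squares of forms, and the necessarily psd irreducible quadratic factors $\alpha z^2+\beta zw+\gamma w^2$ become sums of squares of linear forms by completing the square and applying the two-squares fact to their totally positive coefficients; multiplying out (a product of sums of squares of forms is again one) yields $F=\sum_i A_i^2$ with forms $A_i\in K_0[z,w]$ of degree $m$, and clearing the common denominator $d\in R_0$ gives $d^2F=\sum_i(dA_i)^2$ in $R_0[z,w]$.

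The last step, which I expect to be the main obstacle, is to divide out $d^2$: one must show that if $d^2F$ is a sum of squares of degree-$m$ forms over $R_0$ and $F$ is psd, then $F$ itself is a sum of squares of degree-$m$ forms over $R_0$. This is a local-to-global problem on the smooth affine curve $S^1$, whose divisor class group is $\dZ/2$, so the passage from the function field to the ring is not automatic: I would localise $R_0$ at each closed point in the support of $d$, obtaining a discrete valuation ring with residue field $\dR$ or $\dC$ over which the valuation of $d^2$ can be cancelled against that of the coefficients of $\sum_i(dA_i)^2$ to produce a local sos representation of $F$, and then glue these by the local--global principle for sums of squares on a smooth affine real curve with compact set of real points, keeping all summands forms of degree $m$. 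Once $F$ is a sum of squares of degree-$m$ forms over $R_0$, dehomogenisation finishes the proof; the function-field decomposition is essentially formal once the Pythagoras number of $\dR(t)$ is available, while all the genuine content sits in this descent from $K_0$ back to $R_0$, where the geometry of the circle --- in particular the behaviour of psd forms at its non-real points --- really enters (cf.\ \cite{bcr} for the relevant real-algebraic facts).
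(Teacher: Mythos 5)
The paper does not actually prove this theorem; it cites Scheiderer--Wenzel \cite{sw}, Theorem~2. So the comparison is really between your sketch and what a correct self-contained proof would have to do.

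Your roadmap (reduce to the circle ring $R_0$, homogenize in $z$, pass to the function field $K_0\cong\dR(t)$, then descend) is sensible, but there are two gaps, one of which you have flagged yourself. First, the claim that a psd binary form $F\in K_0[z,w]$ decomposes into squares of forms over $K_0$ is not justified by the argument you give: factoring ``over a real closure'' produces factors with coefficients in that real closure, not in $K_0$, and the real closure depends on the chosen ordering; whereas an irreducible factorization over $K_0$ itself can have irreducible factors of arbitrarily high even degree, so the ``linear or quadratic factor'' dichotomy you rely on fails outright. One can likely repair this step (e.g.\ via Artin's solution of Hilbert's 17th problem for $K_0(z)$ together with Cassels' theorem that a polynomial which is a sum of squares of rational functions is a sum of squares of polynomials), but that is a different argument from the one you wrote. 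Second --- and this is the gap you acknowledge --- the descent from $d^2F=\sum_i(dA_i)^2$ over $R_0$ to a denominator-free representation of $F$ is precisely where all the difficulty lives, and the ``local--global principle for sums of squares on a smooth affine real curve with compact real locus'' is not a usable black box here: that principle (Scheiderer, 2003) concerns elements of the coordinate ring of the curve, whereas your $F$ is a binary form over $R_0$, i.e.\ an element of the coordinate ring of the \emph{two-dimensional} variety $S^1\times\dR$ (three-dimensional after homogenizing). Psd$=$sos in dimension $\ge2$ is a much more delicate matter, with known counterexamples in general, and making the local sos representations patch together into a global one with bounded degrees is exactly the content of the Scheiderer--Wenzel theorem that you would be trying to prove. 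As it stands the sketch does not close the argument; the essential step is deferred to a principle that is not available in the form invoked.
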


Theorem \ref{cylthm} says that every non-negative polynomial on the
cylinder surface in $\dR^3$ is a sum of squares of polynomials on
this surface. This result can be seen as a strengthening of
Marshall's celebrated \emph{strip theorem} \cite{ma}. It is proved
in \cite{sw} Theorem~2.

\begin{thm}\label{bisgbdd}
The real algebra $B=\dR[x,y,\frac{x^2}{x^2+y^2},\frac{xy}{x^2+y^2}]$
satisfies $\rm psd=sos$.
\end{thm}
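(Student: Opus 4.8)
The plan is to deduce Theorem~\ref{bisgbdd} from Theorem~\ref{bisg} by keeping careful track of degrees. The first step is to identify $B$ as a ring of fractions: for $k\ge0$ let $J_k\subseteq\dR[x,y]$ denote the ideal of all polynomials whose monomials have total degree $\ge k$ (so $J_k=(x,y)^k$). The set $\bigcup_{m\ge0}(x^2+y^2)^{-m}J_{2m}$ is readily checked to be an $\dR$-subalgebra of $\dR(x,y)$, and it contains $x,y,\frac{x^2}{x^2+y^2},\frac{xy}{x^2+y^2}$; hence $B\subseteq\bigcup_m(x^2+y^2)^{-m}J_{2m}$. For the reverse inclusion, a short induction on $k$ shows $\frac{x^{\alpha}y^{\beta}}{(x^2+y^2)^k}\in B$ whenever $\alpha+\beta=2k$: if $\alpha,\beta\ge1$ split off a factor $\frac{xy}{x^2+y^2}$, and if $\alpha=0$ (resp.\ $\beta=0$) split off $\frac{y^2}{x^2+y^2}=1-\frac{x^2}{x^2+y^2}$ (resp.\ $\frac{x^2}{x^2+y^2}$), reducing to $k-1$ in each case. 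Since $J_{2k}$ is generated as a $\dR[x,y]$-module by monomials of degree $2k$, this gives $(x^2+y^2)^{-k}J_{2k}\subseteq B$, so $B=\bigcup_{m\ge0}(x^2+y^2)^{-m}J_{2m}$; in particular every element of $B$ is of the form $\frac{p}{(x^2+y^2)^m}$ with $p\in J_{2m}$.

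Now let $b\in B_+$, $b\ne0$, and write $b=\frac{p}{(x^2+y^2)^m}$ with $p\in J_{2m}$. The character set of $B$ is $\dR^2\setminus\{0\}$, so $p=b\cdot(x^2+y^2)^m$ is $\ge0$ on $\dR^2\setminus\{0\}$, hence on all of $\dR^2$ by continuity; thus $p$ is psd. The proof of Theorem~\ref{bisg} shows that $(x^2+y^2)^Np$ is a sum of squares in $\dR[x,y]$ for some $N\ge0$; write $(x^2+y^2)^Np=\sum_i s_i^2$ with $s_i\in\dR[x,y]$. Since $(x^2+y^2)\sum_i s_i^2=\sum_i\bigl((xs_i)^2+(ys_i)^2\bigr)$ is again a sum of squares of polynomials, we may enlarge $N$ so that $m+N=2K$ is even.

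The crucial point is that each $s_i$ automatically lies in $J_{2K}$. Let $p_0$ be the lowest-degree homogeneous part of $p$; then $\deg p_0\ge2m$ because $p\in J_{2m}$, so the lowest form of $(x^2+y^2)^Np$ is $(x^2+y^2)^Np_0$, of degree $2N+\deg p_0\ge2(N+m)=4K$. On the other hand, let $e$ be the least order of vanishing at the origin among the nonzero $s_i$; the degree-$2e$ part of $\sum_i s_i^2$ is the sum of the squares of the lowest forms of those $s_i$ with order exactly $e$, hence a sum of squares of nonzero real forms, hence a nonzero polynomial, hence the lowest form of $(x^2+y^2)^Np$. Therefore $2e=2N+\deg p_0\ge4K$, i.e.\ $e\ge2K$, so every monomial of every $s_i$ has degree $\ge2K$, that is $s_i\in J_{2K}$. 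By the first paragraph $\frac{s_i}{(x^2+y^2)^K}\in B$, and
\[
b=\frac{p}{(x^2+y^2)^m}=\frac{(x^2+y^2)^Np}{(x^2+y^2)^{2K}}=\sum_i\Bigl(\frac{s_i}{(x^2+y^2)^K}\Bigr)^2\in\Sigma B^2 .
\]
Together with the trivial inclusion $\Sigma B^2\subseteq B_+$ this proves $\Sigma B^2=B_+$.

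The main obstacle is exactly the third paragraph. A priori the sum-of-squares certificate for $(x^2+y^2)^Np$ furnished by Theorem~\ref{bisg} only puts $b$ into $\Sigma A^2$ for the \emph{larger} ring $A=\dR[x,y,(x^2+y^2)^{-1}]$, since $B$ is a proper subalgebra of $A$; one must argue that the certifying polynomials $s_i$ vanish to order $\ge2K$ at the origin, so that dividing through by $(x^2+y^2)^{2K}$ lands back in $B$ rather than in $A$. This is what the lowest-homogeneous-form computation achieves, and it works precisely because the numerator $p$ was already forced to lie in $J_{2m}$. The first-paragraph description of $B$ is elementary, but it is used in both directions — to put $b$ over the denominator $(x^2+y^2)^m$ with numerator in $J_{2m}$, and to recognize $\frac{s_i}{(x^2+y^2)^K}$ as an element of $B$ — so it should be recorded with care.
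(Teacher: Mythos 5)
Your proof is correct and follows essentially the same approach as the paper: you identify $B$ as the nonnegative-degree part of $A=\dR[x,y,(x^2+y^2)^{-1}]$ (your union $\bigcup_m(x^2+y^2)^{-m}J_{2m}$ is exactly the paper's $\bigoplus_{d\ge0}A_d$ in different notation), reduce to Theorem~\ref{bisg}, and then argue that each factor in the sos certificate already lies in $B$ by examining lowest-degree homogeneous components and using that a sum of squares of nonzero real forms is nonzero. The paper's version of the last step is phrased directly in terms of the $\dZ$-grading of $A$ (if some $f_i$ had a nonzero component of negative degree $m$, then $f=\sum_i f_i^2$ would have one in degree $2m<0$), which is the same computation as your vanishing-order argument at the origin.
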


\begin{proof}
We'll derive Theorem \ref{bisgbdd} from Theorem \ref{bisg}.
The ring $A=\dR[x,y,(x^2+y^2)^{-1}]$ from Theorem \ref{bisg} can be
given a $\dZ$-grading, namely $A=\bigoplus_{d\in\dZ}A_d$ with
$$A_d\>=\>\Bigl\{\frac{p(x,y)}{(x^2+y^2)^n}\colon p(x,y)\in\dR[x,y]
\text{ homogeneous of degree }2n+d\Bigr\}$$
The ring $B$ is a subring of $A$. Since $B$ is generated by
homogeneous elements of $A$ that have non-negative degrees, the
inclusion $B\subseteq\bigoplus_{d\ge0}A_d$ is obvious. We first show
that equality holds here, and start with degree $d=0$. So let
$f\in A_0$, say $f=p(x,y)\cdot(x^2+y^2)^{-n}$ where $n\ge0$ and
$p(x,y)\in\dR[x,y]$ is homogeneous of degree~$2n$. There exists a
ternary form $q(u,v,w)$ of degree $n$ in $\dR[u,v,w]$ such that
$p(x,y)=q(x^2,xy,y^2)$. Therefore
$$\frac{p(x,y)}{(x^2+y^2)^n}\>=\>q\Bigl(\frac{x^2}{x^2+y^2},\,
\frac{xy}{x^2+y^2},\,1-\frac{x^2}{x^2+y^2}\Bigr)$$
holds, showing that $f\in B$.

When $d\ge0$ is arbitrary, every element $f\in A_d$ can be written
as a finite sum $f=\sum_iq_if_i$, where the $q_i$ are forms of
degree~$d$ in $\dR[x,y]$ and the $f_i$ lie in $A_0$. So $q_i\in B$,
and also $f_i\in B$ by what was just shown. We conclude $f\in B$, and
so $B=\bigoplus_{d\ge0}A_d$ has been shown.

Now let $f=f(x,y)\in B$ be a psd element of $B$. Then the rational
function $f(x,y)$ is non-negative on $\dR^2\setminus\{(0,0)\}$. From
Theorem \ref{bisg} we know that $f$ is a sum of squares in $A$, so
there is an identity $f=\sum_if_i^2$ with $f_i\in A$. We claim that
each $f_i$ lies in $B$. Assuming that this is false, let $m<0$ be the
smallest degree for which some $f_i$ has a non-zero homogeneous
component in degree~$m$. Then $f=\sum_if_i^2$ has a non-zero
homogeneous component in degree $2m<0$, a contradiction.
So indeed $f_i\in B$ for every index~$i$, and so $f$ is a sum of
squares in $B$.
\end{proof}

We remark that
the degree zero component $B_0$ of $B$ is the algebra
$B_0=A_0=\dR[\frac{x^2}{x^2+y^2},\,\frac{xy}{x^2+y^2}]$. It is easy to
see that $B_0=A_0$ agrees with the ring of all elements of $A$ that
are bounded (as functions on $\dR^2\setminus\{(0,0)\}$).

\section{Applications to three semigroup $*$-algebras}\label{semigroupappl}

In this section we consider the following three $*$-semigroups:\smallskip

1.) $\dZ^2$ with involution $(k,n)^*=(n,k)$ for $k,n\in \dZ$,

2.)  $\dN_0\times \dZ$ with involution $(k,n)^*=(k,-n)$ for $k\in \dN_0$,  $n\in \dZ$.

3.) ${\sN}_+:=\{(k,n)\in \dZ^2:k+n\geq 0\}$ with involution $ (k,n)^*=(n,k)$ for $k,n\in \dZ.$\smallskip

In all three cases the semigroup composition is coordinatewise addition.

The semigroup $*$-algebras $ \dC[S]$ of all three $*$-semigroups have the moment property, that is, each  positive linear functional $L$ on $\dC[S]$  is a moment functional. In the case $S=\dZ^2$ this  is T. M. Bisgaard's theorem
 \cite{bisgaard}, as discussed above. For $S=\dN_0\times\dZ$ this result is A. Devinatz' theorem \cite{devinatz} and for $S={\sN_+}$ it is due to J. Stochel and F. H. Szafraniec \cite{sz}. In \cite{sch17} all three results are derived from the fibre theorem, see Theorem 15.15, Proposition 15.15 and Theorem 15.14, respectively. The original approaches to the  latter two theorems emphasize the closed interplay between moment problems and Hilbert space operators. Devinatz'  original proof uses  the spectral theorem, while the approach of  Stochel and Szafraniec is  based on the polar decomposition and provides a characterization of subnormality of some unbounded operators (see also \cite{sz89}).

Let $S$ be an arbitrary  finitely generated $*$-semigroup.  If
$\Sigma_h\dC[S]^2=\dC[S]_+$,  then it follows from Haviland's
theorem (Proposition \ref{haviland}) that the complex $*$-algebra
$\dC[S]$ has the moment property. The converse is not true. For
instance, if $S=\dN_0\times \dZ^d$ and $d\geq 2$, then $\dC[S]$
has also the moment property (see \cite[Proposition 15.5]{sch17}),
but $\Sigma_h\dC[S]^2\neq \dC[S]_+$. (If $d\geq 3$ it suffices to note there exist trigonometric polynomials  which are nonnegative on the torus $\dT^d$, but not hermitean sums of squares.)

 The following theorem shows that the  three $*$-semigroups mentioned above satisfy the stronger property $\sum \dC[S]^2=\dC[S]_+$.

 \begin{thm}\label{semigroup}
 Let $\dC[S]$ be one of the three $*$-semigroups $\dZ^2$, $\dN_0\times \dZ$,
 $\sN_+$ defined above. Then $\Sigma_h\dC[S]^2=\dC[S]_+$.
 \end{thm}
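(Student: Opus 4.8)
The plan is to identify each semigroup $*$-algebra $\dC[S]$ with (a localization of) one of the polynomial algebras treated in Section \ref{psdsos}, and then to invoke Theorems \ref{bisg}, \ref{cylthm}, \ref{bisgbdd}, together with the standard reduction $\Sigma_h\dC[S]^2 = \Sigma B^2$ for $B = \dC[S]_\her$. First I would make the $*$-isomorphisms explicit. For $S = \dZ^2$ with $(k,n)^* = (n,k)$, the character map $(k,n)\mapsto z^k\ov z^{\,n}$ (equivalently $(k,n)\mapsto w^k\ov w^{\,n}$ after a change of generators) identifies $\dC[S]$ with $\dC[z,\ov z, z^{-1}, \ov z^{-1}]$, whose hermitean part, writing $z = x+\ii y$, is generated by $x$, $y$ and $(x^2+y^2)^{-1}$; this is exactly the algebra $A = \dR[x,y,\frac1{x^2+y^2}]$ of Theorem \ref{bisg}. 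For $S = \dN_0\times\dZ$ with $(k,n)^*=(k,-n)$, the algebra $\dC[S]$ is $\dC[s, w, w^{-1}]$ with $s = s^*$ and $w^* = w^{-1}$; setting $w = \cos\theta + \ii\sin\theta$ on the character space and renaming, the hermitean part becomes $\dR[s,z]/(1-z^2-\,\cdot\,)$ — more precisely it matches $\dR[x,y,z]/(1-x^2-y^2)$ of Theorem \ref{cylthm} after identifying the unitary generator with $(x,y)$ on the unit circle and the self-adjoint generator with $z$. For $S = \sN_+ = \{(k,n): k+n\ge 0\}$, the generators are $a = (1,-1)$, $b=(-1,1)$, $c = (1,1)$ with $a^* = b$, $c^* = c$, $ab = 1$, $c = $ the product structure forcing $c^2 = $ something expressible in $a,b$; the hermitean part is then generated by $\frac12(a+b)$, $\frac1{2\ii}(a-b)$, $c$, and one checks it is $*$-isomorphic to $B = \dR[x,y,\frac{x^2}{x^2+y^2},\frac{xy}{x^2+y^2}]$ of Theorem \ref{bisgbdd} (the boundedness of $a+b$, $a-b$ matching the boundedness of the degree-zero fractions noted in the remark after that theorem).

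The key steps, in order, are: (1) write down the three $*$-isomorphisms $\dC[S] \cong$ (the relevant complex fraction algebra) carefully, keeping track of which monomials are the generators and how the involution acts; (2) for each, compute the hermitean part and exhibit a $*$-isomorphism with one of $\dR[x,y,\frac1{x^2+y^2}]$, $\dR[x,y,z]/(1-x^2-y^2)$, $\dR[x,y,\frac{x^2}{x^2+y^2},\frac{xy}{x^2+y^2}]$; (3) invoke the identity $\Sigma_h\dC[S]^2 = \Sigma B^2$ (recalled in Section \ref{prel}) and the bijection between characters of $\dC[S]$ and of $B$, which gives $\dC[S]_+ \leftrightarrow B_+$; (4) apply Theorems \ref{bisg}, \ref{cylthm}, \ref{bisgbdd} respectively to conclude $\Sigma B^2 = B_+$, hence $\Sigma_h\dC[S]^2 = \dC[S]_+$.

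The main obstacle will be step (2) for $\sN_+$: the isomorphism between $\dC[\sN_+]$ and the bounded-fraction algebra is the least transparent of the three. The monoid $\sN_+$ is generated by $(1,-1)$, $(-1,1)$, $(0,1)$ (or $(1,1)$), with relations coming from $k+n \ge 0$, and one must verify that the resulting $*$-algebra is really the subring $\bigoplus_{d\ge0}A_d = B$ of $A = \dR[x,y,(x^2+y^2)^{-1}]$ and not something larger or smaller — in particular that the self-adjoint generator $(1,1)$ corresponds to an \emph{unbounded} element while the pair $(1,-1), (-1,1)$ corresponds to the bounded degree-zero part, and that no extra relations are lost. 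For $\dN_0\times\dZ$ the subtlety is milder: one must check that the passage from the unitary variable $w$ on $\dT$ to the pair $(x,y)$ on the circle $x^2+y^2=1$ is an honest algebra isomorphism (it is, since $\dC[w,w^{-1}]_\her \cong \dR[\mathrm{Re}\,w, \mathrm{Im}\,w] \cong \dR[x,y]/(x^2+y^2-1)$), and then tensor/adjoin the free self-adjoint variable $s$ to land in $\dR[x,y,z]/(1-x^2-y^2)$. Once these identifications are pinned down, nothing further is needed beyond citing the three theorems of Section \ref{psdsos}.
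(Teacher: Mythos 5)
Your overall plan is the same as the paper's: realize each $\dC[S]$ as a complex fraction $*$-algebra $A$, compute $B = A_\her$, use $\Sigma_h A^2 = \Sigma B^2$ together with the character bijection $\hat A \to \hat B$ carrying $A_+$ onto $B_+$, and then quote Theorems \ref{bisg}, \ref{cylthm}, \ref{bisgbdd}. Your sketches for $\dZ^2$ and $\dN_0\times\dZ$ are correct and essentially reproduce what the paper does.

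Your treatment of $\sN_+$, which you yourself flag as the delicate case, contains a genuine error. The elements $a=(1,-1)$, $b=(-1,1)$, $c=(1,1)$ do \emph{not} generate the monoid $\sN_+$: every nonnegative integral combination $m a + m' b + n c = (m-m'+n,\, -m+m'+n)$ has coordinate sum $2n$, which is always even, so $(1,0)$ and $(0,1)$ are unreachable. Passing to the $*$-algebra (and noting $a^*=b$, $c^*=c$, so closing under $*$ adds nothing new), the hermitean part you would get is $\dR\bigl[\frac{x^2-y^2}{x^2+y^2},\ \frac{2xy}{x^2+y^2},\ x^2+y^2\bigr]$, which does not contain $x$ or $y$ and is a \emph{proper} subalgebra of the target $B=\dR[x,y,\frac{x^2}{x^2+y^2},\frac{xy}{x^2+y^2}]$. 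For the same parity reason, your parenthetical "$(0,1)$ (or $(1,1)$)" is misleading: $(1,1)$ cannot replace $(0,1)$. The correct generating set, and the one the paper uses, is $z=(1,0)$ and $v=z\ov z^{-1}=(1,-1)$ together with their conjugates $(0,1)$ and $(-1,1)$; these do generate $\sN_+$, and expanding $\frac12(1\pm v)$ shows the hermitean part is exactly $\dR[x,y,\frac{x^2}{x^2+y^2},\frac{xy}{x^2+y^2}]$, which the paper's Theorem \ref{bisgbdd} proof identifies with $\bigoplus_{d\ge 0}A_d$ inside $A=\dR[x,y,(x^2+y^2)^{-1}]$. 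With that repair in place your step (4) goes through as intended.
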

\begin{proof}
The strategy of proof is the following. First we define a $*$-isomorphism of $\dC[S]$ to a certain complex $*$-algebra $A$ of rational functions. The hermitean part $B:=A_\her$ of $A$ is a unital real algebra. Then we apply  results of Section \ref{psdsos} to the corresponding real algebra $B$ to conclude that $\Sigma B^2=B_+$. Recall  that $\Sigma_h A^2=\Sigma B^2$ and the map $\chi\to \chi\lceil  B$ is a bijection of $\hat{A}$ on $\hat{B}$ which maps $A_+$ on $B_+$. Therefore, the equality  $\Sigma B^2=B_+$ implies that $\Sigma_h A^2 =A_+$, which is the assertion.

Thus, it remains to determine   the algebra $B$ and to prove that $\Sigma B^2=B_+.$ \smallskip

1.) $S=\dZ^2$.

As noted  in the introduction, the map $(k,n)\to z^k\ov{z}^n$ gives a $*$-isomorphism of $\dC[S]$  to $A=\dC[z,\ov{z}, z^{-1},\ov{z}^{-1}]$. We write $z=x+\ii y$, with $x={\rm Re } , z, y={\rm Im}\, z$. Since $z^{-1}=\frac{x-\ii y}{x^2+y^2}$,  a short computation shows  that $A$ is generated by the four real-valued functions
$x,y, \frac{x}{x^2+y^2}\frac{y}{x^2+y^2}$ and hence by $x,y, \frac{1}{x^2+y^2}$, so that
\begin{align}\label{b1}B=\dR\Big[x,y, \frac{1}{x^2+y^2}\Big].
\end{align}
Clearly, $\hat{B}=\dR^2\backslash \{(0,0)\}$. The equality $\Sigma B^2=B_+$ is just  Theorem \ref{bisg}.\smallskip

2.) $S=\dN_0\times \dZ.$\smallskip

Then the complex $*$-algebra
$\dC[S]$  is $*$-isomorphic to the complex  polynomials on the cylinder surface $C:=\{(x,y,z)\in \dR^3: x^2+y^2=1\}$. From this  fact we obtain .
\begin{align}\label{b2}
B= \dR[x,y,z]/(1- x^2-y^2).
\end{align}
The character set $\hat{B}$ consists of the point evaluations at points of $C$. Theorem \ref{cylthm} yields the equality $\Sigma B^2=B_+$.
\smallskip

3.) $S={\sN}_+$.
\smallskip

The map $(k,n)\mapsto z^k\ov{z}^n$\, defines a $*$-isomorphism of  $\dC[\sN_+]$ on the $*$-algebra\, $A$ spanned by the functions $z^k\ov{z}^n$ on $\dC$, where $k+n\geq 0,k,n\in \dZ$. 	Clearly,  as a $*$-algebra $A$ is generated by $z$ and $v(z):=z\, \ov{z}^{-1}$. Since
\begin{align*}
1+v(z)=2\frac{x^2+\ii xy}{x^2+y^2}~~\textrm{and}~~1-v(z)=2\frac{x^2-\ii xy}{x^2+y^2},
\end{align*} we conclude that the $*$-algebra $A$ is also generated by the four real-valued functions $x,y,  \frac{x^2}{x^2+y^2},\frac{xy}{x^2+y^2}$. This implies that
\begin{align}\label{b3}
B=\dR\Big[x,y, \frac{x^2}{x^2+y^2},\frac{xy}{x^2+y^2}\Big].
\end{align}
Clearly, $\hat{B}=\dR^2\backslash \{(0,0)\}$. From Theorem \ref{bisgbdd} we obtain $\Sigma B^2=B_+$.

This completes the proof of Theorem \ref{semigroup}.
\end{proof}

\section*{Acknowledgement}

This research was carried out during the  stay of the second author at the Zukunftskolleg of the University of Konstanz. He  would like to thank the Zukunftskolleg for his support.
\bibliographystyle{amsalpha}


\end{document}